\newtheorem{theorem}{Theorem}
\newtheorem{lemma}[theorem]{Lemma}
\newtheorem{remark}{Remark}
\newtheorem{corollary}[theorem]{Corollary}
\numberwithin{equation}{section}
\def\lec{\textrm{lec}}
\def\inv{\textrm{inv}}
\def\aid{\textrm{aid}}
\def\ai{\textrm{ai}}
\def\id{\textrm{id}}
\def\fix{\textrm{fix}}
\def\pix{\textrm{pix}}
\def\rix{\textrm{rix}}
\def\aix{\textrm{aix}}
\def\cont{\textrm{cont}}
\def\exc{\textrm{exc}}
\def\des{\textrm{des}}
\def\maj{\textrm{maj}}
\def\imaj{\textrm{imaj}}
\def\S{\mathfrak{S}}
\def\DD{\mathcal{D}}
\def\W{\mathcal{W}}
\def\N{\mathbb N}
\def\Q{\mathbb Q}
\begin{document}

\title[On some generalized $q$-Eulerian polynomials]{ On some generalized $q$-Eulerian polynomials}

\author{Zhicong Lin}
\address[Zhicong Lin]{Department of Mathematics and Statistics, Lanzhou University, China
  \& Institut Camille Jordan, UMR 5208 du CNRS, Universit\'{e} de Lyon, Universit\'{e} Lyon 1, France}
\email{lin@math.univ-lyon1.fr}

\date{March 11, 2013}

\begin{abstract}
The  $(q,r)$-Eulerian polynomials are 
the  $(\maj-$$\exc,\fix,\exc)$ enumerative polynomials of permutations. 
Using Shareshian and Wachs' exponential generating function of these Eulerian polynomials,
 Chung and Graham proved two symmetrical $q$-Eulerian identities and asked for bijective proofs. We provide such proofs using Foata and Han's three-variable statistic $(\inv-$$\lec,\pix,\lec)$.  
We also prove a new recurrence formula for the $(q,r)$-Eulerian polynomials and study a $q$-analogue of Chung and Graham's restricted descent polynomials. 
In particular, we obtain a generalized symmetrical identity for these restricted $q$-Eulerian polynomials 
with a combinatorial proof.

\end{abstract}
\keywords{Eulerian numbers; symmetrical $q$-Eulerian identities; hook factorization; descents; admissible inversions; permutation statistics}

\maketitle

%

\section{Introduction}

The \emph{Eulerian polynomials} $A_n(t):=\sum_{k=0}^{n}A_{n,k} t^k$ are defined by the exponential generating function 
\begin{align}\label{eq:def1}
\sum_{n\geq 0}A_n(t)\frac{z^n}{n!}=\frac{(1-t)e^{z}}{e^{zt}-te^{z}}.
\end{align}
The coefficients $A_{n,k}$ are called
\emph{Eulerian numbers}. The Eulerian numbers arise in a variety of contexts in mathematics. Let $\S_n$ denote the set of permutations of $[n]:=\{1,2,\ldots,n\}$. For
each $\pi\in\S_n$, a value $i$, $1\leq i\leq n-1$, is an \emph{excedance} (resp.~\emph{descent}) of $\pi$ if $\pi(i)>i$ (resp.~$\pi(i)>\pi(i+1)$). Denote by $\exc(\pi)$ and $\des(\pi)$ the number of excedances and descents of $\pi$, respectively. It is well-known that the Eulerian number  $A_{n,k}$ counts  permutations in $\S_n$ with $k$ descents (or $k$ excedances), that is
$$
A_n(t)=\sum_{\pi\in\S_n}t^{\des\,\pi}=\sum_{\pi\in\S_n}t^{\exc\,\pi}.
$$
 The reader is referred to~\cite{fo,pe} for some leisurely historical introductions of Eulerian polynomials and Eulerian numbers.

 Several $q$-analogs of Eulerian polynomials with combinatorial meanings have been studied in the literature (see \cite{ca,csz,st,sw2}). Recall that the \emph{major index}, $\maj(\pi)$, of a permutation $\pi\in \S_n$ is the sum of all the descents of $\pi$, i.e.,
$\maj(\pi):=\sum_{\pi(i)>\pi(i+1)}i.$
An element $i\in[n]$ is a {\em fixed point} of $\pi\in\S_n$ if $\pi(i)=i$ and we denote by $\fix(\pi)$ the number of fixed points of $\pi$. Define the  {\em $(q,r)$-Eulerian polynomials} $A_n(t,r,q)$  by the following extension of~\eqref{eq:def1}:
\begin{equation}\label{fixversion}
\sum_{n\geq0}A_n(t,r,q)\frac{z^n}{(q;q)_n}=\frac{(1-t)e(rz;q)}{e(tz;q)-te(z;q)},
\end{equation}
where 
$(q;q)_n :=\prod_{i=1}^{n}(1-q^i)$  and $e(z;q)$ is the 
$q$-exponential function 
$
\sum_{n\geq 0}\frac{z^n}{(q;q)_n}. 
$
 The following  interpretation for $A_{n}(t,r,q)$ was given by Shareshian and Wachs~\cite{sw2,sw}:
\begin{equation}\label{fix-v}
A_{n}(t,r,q):=\sum_{\pi\in \S_n}t^{\exc\,\pi}r^{\fix\,\pi}q^{(\maj-\exc)\,\pi}.
\end{equation}
These polynomials have attracted the attention of several authors (cf.\cite{fh,fh2,fh4,fh3,hlz,hy,hx,lsw,sw0,sw3}).

 Let $A_n(t,q)=A_n(t,1,q)$. Define the \emph{$q$-Eulerian numbers} $A_{n,k}(q)$ and the \emph{fixed point $q$-Eulerian numbers} $A_{n,k}^{(j)}(q)$:
\begin{equation*}
A_{n}(t,q)=\sum_{k}  A_{n,k}(q) t^k\quad\text{and}\quad A_n(t,r,q)=\sum_{j,k}A_{n,k}^{(j)}(q)r^jt^k.
\end{equation*}
By~\eqref{fix-v}, we have the following interpretations 
\begin{equation}\label{majcase}
A_{n,k}(q)=\sum_{\pi\in\S_n \atop \exc\pi=k}q^{(\maj-\exc)\pi}\quad\text{and}\quad A_{n,k}^{(j)}(q)=\sum_{\pi\in\S_n \atop {\exc\pi=k\atop\fix\pi=j}}q^{(\maj-\exc)\pi}.
\end{equation}
Recall that the $q$-binomial coefficients ${n\brack k}_q$ are defined by 
$
{n\brack k}_q:=\frac{(q;q)_n }{(q;q)_{n-k}(q;q)_k}
$
for $0\leq k\leq n$,
and ${n\brack  k}_q=0$ if $k<0$ or $k>n$.

 Answering a question of Chung et al.~\cite{cgk},   Han et al.~\cite{hlz} found and 
proved the following symmetrical $q$-Eulerian identity: 
\begin{equation} \label{eq: th16}
\sum_{k\geq 1}{a+b\brack  k}_qA_{k,a-1}(q)=\sum_{k\geq 1}{a+b\brack  k}_qA_{k,b-1}(q),
\end{equation}
where $a,b$ are integers with $a,b\geq1$. Besides a generating function proof using~\eqref{fixversion}, a bijective proof of~\eqref{eq: th16}  was also given in~\cite{hlz}. Recently, through analytical arguments,  Chung and Graham~\cite{cg} derived from~\eqref{fixversion} the following two further symmetrical $q$-Eulerian identities:

\begin{equation} \label{eq: th17}
\sum_{k\geq 1}(-1)^k{a+b\brack  k}_qq^{{a+b-k\choose 2}}A_{k,a}(q)=\sum_{k\geq 1}(-1)^k{a+b\brack  k}_qq^{{a+b-k\choose 2}}A_{k,b}(q),
\end{equation}
\begin{equation}\label{syfixversion}
\sum_{k\geq1}{a+b+j+1\brack k}_qA_{k,a}^{(j)}(q)=\sum_{k\geq1}{a+b+j+1\brack k}_qA_{k,b}^{(j)}(q),
\end{equation}
where $a,b,j$ are integers with $a,b\geq1$ and $j\geq0$, and  asked for bijective proofs. 
Our first aim is to provide such proofs using another interpretation of $A_n(t,r,q)$ introduced by Foata and Han~\cite{fh2}, which was already shown to be successful in the bijective proof of~\eqref{eq: th16} in~\cite{hlz}.

Next, for $1\leq j\leq n$, 
we shall define the {\em restricted $q$-Eulerian polynomial} $B_{n}^{(j)}(t,q)$ by the exponential generating function:
\begin{equation}\label{restrict}
\sum_{n\geq j}B_{n}^{(j)}(t,q)\frac{z^{n-1}}{(q;q)_{n-1}}=\left(\frac{A_{j-1}(t,q)(qz)^{j-1}}{(q;q)_{j-1}}\right)\frac{e(tz;q)-te(tz;q)}{e(tz;q)-te(z;q)}.
\end{equation}
and the \emph{restricted $q$-Eulerian number} $B_{n,k}^{(j)}(q)$ by $B_{n}^{(j)}(t,q)=\sum_{k}  B_{n,k}^{(j)}(q) t^k$.
 We find the following generalized symmetrical identity for the restricted $q$-Eulerian polynomials.
 \begin{theorem}\label{th:main1} Let $a,b,j$ be integers with $a,b\geq1$ and $j\geq2$. Then
\begin{align}\label{speci}
\sum_{k\geq1}{a+b+1\brack k-1}_qB_{k,a}^{(j)}(q)=\sum_{k\geq1}{a+b+1\brack k-1}_qB_{k,b}^{(j)}(q).
\end{align}
\end{theorem}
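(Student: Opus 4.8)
The plan is to convert the symmetric identity~\eqref{speci} into a single palindromy (self-reciprocity) statement for an explicit polynomial in $t$, and then to factor that polynomial into two pieces that are each visibly palindromic. First I would read off a closed product form for $B_n^{(j)}(t,q)$ from its definition~\eqref{restrict}. The last fraction there is $\frac{(1-t)e(tz;q)}{e(tz;q)-te(z;q)}$, which is the $r=t$ specialization of~\eqref{fixversion} and hence equals $\sum_{m\ge0}A_m(t,t,q)z^m/(q;q)_m$. Extracting the coefficient of $z^{n-1}/(q;q)_{n-1}$ on both sides of~\eqref{restrict} (a $q$-binomial convolution, using $(qz)^{j-1}=q^{j-1}z^{j-1}$) yields the factorization
\[
B_n^{(j)}(t,q)=q^{j-1}{n-1\brack j-1}_qA_{j-1}(t,q)\,A_{n-j}(t,t,q).
\]

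Next I would realize both sides of~\eqref{speci} as one coefficient extraction. Writing $\Psi(t,q;z)$ for the right-hand side of~\eqref{restrict} and multiplying by $e(z;q)$, the $q$-binomial convolution identifies $\sum_k{M\brack k-1}_qB_k^{(j)}(t,q)$ with the coefficient of $z^M/(q;q)_M$ in $e(z;q)\Psi(t,q;z)$. Taking $M=a+b+1$ and then reading off the coefficients of $t^a$ and $t^b$ recovers exactly the two sides of~\eqref{speci}. Collecting the factors, both sides are the $t^a$- and $t^b$-coefficients of
\[
\Theta(t):=q^{j-1}{a+b+1\brack j-1}_qA_{j-1}(t,q)\,\omega_N(t),\qquad \omega_N(t):=\sum_{s\ge0}{N\brack s}_qA_s(t,t,q),
\]
where $N=a+b+2-j$. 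Since $\Theta$ depends on $a,b$ only through $a+b$, and since the reflection $c\mapsto(a+b)-c$ carries $t^a$ to $t^b$, the identity~\eqref{speci} is \emph{equivalent} to the single assertion that $\Theta(t)$ is palindromic of degree $a+b$, i.e.\ $[t^c]\Theta=[t^{\,a+b-c}]\Theta$.

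Now the prefactor $q^{j-1}{a+b+1\brack j-1}_q$ is a nonzero constant in $t$ and may be dropped, and $A_{j-1}(t,q)$ is palindromic of degree $j-2$ by the $q$-Eulerian symmetry $A_{j-1,i}(q)=A_{j-1,\,j-2-i}(q)$ of Shareshian and Wachs. Because a product of palindromic polynomials is palindromic of the summed degree, and $(j-2)+N=a+b$, it suffices to prove that $\omega_N(t)$ is palindromic of degree $N$.

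The crux is therefore this palindromy of $\omega_N$. The quick route is a functional equation: setting $\Omega(t,q;z):=e(z;q)\,\frac{(1-t)e(tz;q)}{e(tz;q)-te(z;q)}=\sum_{N\ge0}\omega_N(t)z^N/(q;q)_N$, a direct simplification of the $q$-exponential factors gives
\[
\Omega(t,q;z)=\Omega(t^{-1},q;tz),
\]
and comparing coefficients of $z^N/(q;q)_N$ yields $\omega_N(t)=t^N\omega_N(t^{-1})$, which is precisely the desired palindromy (the degree equals $N$, as the identity permutation supplies the single top term $t^N$ and the empty permutation the constant term $1$). The harder and, in the spirit of this paper, preferred route is combinatorial: using the Foata--Han interpretation $A_s(t,t,q)=\sum_{\sigma\in\S_s}t^{(\lec+\pix)\sigma}q^{(\inv-\lec)\sigma}$ together with the $q$-shuffle meaning of ${N\brack s}_q$, one realizes $\omega_N(t)$ as a generating polynomial over hook-factorized words of length $N$ and constructs an involution sending the relevant statistic $\ell$ to $N-\ell$ while preserving the $q$-weight. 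Making this involution simultaneously compatible with $\inv-\lec$, $\lec$, $\pix$ and the shuffle is the main obstacle; the functional equation above serves as a safety net, certifying that the combinatorial statement is true and fixing the exact symmetry the involution must realize.
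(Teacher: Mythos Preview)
Your argument is correct and takes a genuinely different route from both of the paper's proofs. You first read off the product formula $B_n^{(j)}(t,q)=q^{j-1}{n-1\brack j-1}_qA_{j-1}(t,q)A_{n-j}(t,t,q)$ directly from~\eqref{restrict}, then multiply the generating function by $e(z;q)$ to realize $\sum_k{a+b+1\brack k-1}_qB_k^{(j)}(t,q)$ as a single coefficient $\Theta(t)$, and finally reduce~\eqref{speci} to the palindromy $\omega_N(t)=t^N\omega_N(t^{-1})$, which follows from the clean functional equation $\Omega(t,q;z)=\Omega(t^{-1},q;tz)$. By contrast, the paper's analytic proof multiplies~\eqref{restrict} by $e(tz;q)-te(z;q)$ (not by $e(z;q)$), extracts a two-term relation~(\ref{16}), and then closes the argument by invoking the separately proved reflection Lemma~\ref{reflection} for $B_{n,k}^{(j)}(q)$ together with the boundary identity $B_{j,k}^{(j)}(q)=q^{j-1}A_{j-1,k}(q)$; the paper's second proof is fully bijective, via the hook-factorization interpretation of Lemma~\ref{le:6} and an explicit involution on $j$-restricted two-pix-permutations (Lemma~\ref{le:8}). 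What your approach buys is economy: once the factorization of $B_n^{(j)}$ is noticed, the whole theorem collapses to the one-line symmetry of $\Omega$, and you never need Lemma~\ref{reflection}. What the paper's proofs buy is extra structure: Lemma~\ref{reflection} is of independent interest (it refines the symmetry to each $\S_n^{(j)}$), and the bijective proof gives an explicit weight-preserving involution rather than a generating-function identity. One small quibble: your ``equivalent'' should be ``implied by'', since~\eqref{speci} only asserts $[t^a]\Theta=[t^b]\Theta$ for $a,b\ge1$ and hence does not by itself force $[t^0]\Theta=[t^{a+b}]\Theta$; but you prove the stronger palindromy, so the implication you need is the one you have.
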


When $q=1$, the above identity was proved  by Chung and Graham~\cite{cg}, who  also asked for a bijective proof.
We shall give a bijective proof and an analytical proof of~\eqref{speci},  the latter leads to 
a new recurrence formula for $A_n(t,r,q)$.

\begin{theorem}\label{recu}
The $(q,r)$-Eulerian polynomials satisfy the following recurrence formula: 
\begin{align}\label{recurrence2}
A_{n+1}(t,r,q)=rA_n(t,r,q)+tA_n(t,q)+t\sum_{j=1}^{n-1}{n\brack j}_qq^jA_j(t,r,q)A_{n-j}(t,q)
\end{align}
for $n\geq1$ and $A_1(t,r,q)=r$.
\end{theorem}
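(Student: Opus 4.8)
The plan is to derive \eqref{recurrence2} directly from the generating function \eqref{fixversion} by applying a $q$-differentiation operator that shifts the index $n$ by one. Write
$$F(z)=\sum_{n\geq0}A_n(t,r,q)\frac{z^n}{(q;q)_n},\qquad G(z)=\sum_{n\geq0}A_n(t,q)\frac{z^n}{(q;q)_n},\qquad H(z)=e(tz;q)-te(z;q),$$
so that \eqref{fixversion} reads $F(z)H(z)=(1-t)e(rz;q)$ and, setting $r=1$, $G(z)H(z)=(1-t)e(z;q)$. I introduce the operator $\delta_q$ that acts on a series by $\delta_q\big(\sum_n a_n z^n/(q;q)_n\big)=\sum_{n\geq0}a_{n+1}z^n/(q;q)_n$; equivalently $\delta_q=(1-q)D_q$ with $D_q$ the Jackson $q$-derivative. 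The point is that $\delta_q F$ encodes precisely the left-hand side $A_{n+1}(t,r,q)$ of the recurrence.

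First I would record the two properties of $\delta_q$ that drive the argument. From the series of the $q$-exponential one gets $\delta_q e(cz;q)=c\,e(cz;q)$ for any constant $c$, so in particular $\delta_q e(rz;q)=r\,e(rz;q)$ and $\delta_q H(z)=t\big(e(tz;q)-e(z;q)\big)$. Second, and crucially, $\delta_q$ obeys the twisted Leibniz rule $\delta_q(fg)(z)=(\delta_q f)(z)\,g(z)+f(qz)\,(\delta_q g)(z)$; comparing coefficients this is exactly the $q$-Pascal identity ${n+1\brack j}_q={n\brack j-1}_q+q^j{n\brack j}_q$, so the appearance of $F(qz)$, and hence the factor $q^j$ in \eqref{recurrence2}, is forced by the product rule.

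Next I would apply $\delta_q$ to $FH=(1-t)e(rz;q)$. The right-hand side becomes $r(1-t)e(rz;q)=rFH$, while the Leibniz rule gives $(\delta_q F)H+F(qz)\,\delta_q H=rFH$. To eliminate $\delta_q H$ I would establish the algebraic identity
$$e(tz;q)-e(z;q)=H(z)\big(1-G(z)\big),$$
which follows by solving $GH=(1-t)e(z;q)$ for $e(z;q)$ and substituting into $e(tz;q)=H+te(z;q)$. Hence $\delta_q H=tH(1-G)$, and dividing the displayed relation by $H(z)$ (legitimate since $H(0)=1-t\neq0$) yields the clean functional equation
$$(\delta_q F)(z)=rF(z)-tF(qz)+tF(qz)G(z).$$

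Finally I would extract the coefficient of $z^n/(q;q)_n$ from both sides. The first three pieces give $A_{n+1}(t,r,q)$, $rA_n(t,r,q)$ and $-tq^nA_n(t,r,q)$, while the $q$-binomial convolution $\big(\sum_j p_j z^j/(q;q)_j\big)\big(\sum_k q_k z^k/(q;q)_k\big)=\sum_n\big(\sum_j{n\brack j}_q p_jq_{n-j}\big)z^n/(q;q)_n$ turns $tF(qz)G(z)$ into $t\sum_{j=0}^n{n\brack j}_q q^jA_j(t,r,q)A_{n-j}(t,q)$. Splitting off the boundary terms $j=0$ (giving $tA_n(t,q)$, since $A_0(t,r,q)=1$) and $j=n$ (giving $+tq^nA_n(t,r,q)$, since $A_0(t,q)=1$) makes the two $tq^nA_n(t,r,q)$ terms cancel, and \eqref{recurrence2} falls out; the base case $A_1(t,r,q)=r$ comes from reading the coefficient of $z$ in $F$. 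I expect the main obstacle to be pinning down the twisted product rule for $\delta_q$ and the identity $e(tz;q)-e(z;q)=H(1-G)$ correctly — everything afterward is bookkeeping, with the cancellation of the $q^n$-terms serving as a reassuring consistency check.
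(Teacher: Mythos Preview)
Your proof is correct and is essentially the same as the paper's: both apply the Eulerian differential operator $\delta_z f=(f(z)-f(qz))/z$ to the generating function \eqref{fixversion} and extract coefficients. The only cosmetic difference is that the paper differentiates the quotient $F=(1-t)e(rz;q)/H$ directly via the reciprocal rule $\delta_z(1/H)=-\delta_z H/(H(qz)H(z))$, whereas you differentiate the product $FH=(1-t)e(rz;q)$ and then divide by $H$; both routes land on the same functional equation $\delta_z F=rF-tF(qz)+tF(qz)G$, and your identity $e(tz;q)-e(z;q)=H(1-G)$ is exactly what the paper uses implicitly when it recognizes $t(e(z;q)-e(tz;q))/H$ as $t(G-1)$.
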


This  paper is organized as follows. In section~\ref{sec:bi-pr}, we review some preliminaries about the three-variable statistic $(\inv,\pix,\lec)$ and give the bijective proofs of \eqref{eq: th17} and \eqref{syfixversion}. In section~\ref{sec:rec}, we first prove Theorem~\ref{recu} and then define a new statistic called ``$\rix$",  which together with descents and \emph{admissible inversions} (a statistic on permutations which appears in the context of poset topology~\cite{sw2}) gives another interpretation of $A_n(t,r,q)$. In section~\ref{sec:sym-iden}, we give two combinatorial interpretations of $B_{n,k}^{(j)}(q)$ and two proofs of Theorem~\ref{th:main1}.

\section{Bijective proofs  of \eqref{eq: th17} and \eqref{syfixversion}}%
\label{sec:bi-pr}

\subsection{Preliminaries}
A word $w=w_1w_2\ldots w_m$ on $\N$ is called a \emph{hook} if $w_1>w_2$ and either $m=2$, or $m\geq 3$ and $w_2<w_3<\ldots<w_m$.  As shown in~\cite{ge}, each permutation $\pi =\pi_1\pi_2\dots \pi_n$ admits a unique factorization, called its \emph{hook factorization}, $p\tau_{1}\tau_{2}. . . \tau_{r}$, where $p$ is an increasing word and each factor $\tau_1$, $\tau_2$, \ldots, $\tau_k$ is a hook. 
To derive the hook factorization of a permutation, one can start from the right and factor out  each hook step by step. Denote by $\inv(w)$ the numbers of \emph{inversions} of a word $w=w_1w_2\ldots w_m$, i.e., the number of pairs $(w_i,w_j)$ such that $i<j$ and $w_i>w_j$.
Then we define
\begin{align*}
\lec(\pi):= \sum_{1\leq i\leq k}\inv (\tau_i)\quad\text{and}\quad\pix(\pi)=|p|:=\text{length of the factor $p$}.
\end{align*}
 For example, the hook factorization of $\pi=1\,\,3\,\,4\,\,14\,\,12\,\,2\,\,5\,11\,\,15\,\,8\,\,6\,\,7\,\,13\,\,9\,\,10$ is
 $$1\,\,3\,\,4\,\,14\,|\,12\,\,2\,\,5\,\,11\,\,15\,|\,8\,\,6\,\,7\,|\,13\,\,9\,\,10.$$
  Hence  $p=1\,3\,4\,14$, $\tau_1=12\,2\,5\,11\,15$, $\tau_2=8\,6\,7$, 
  $\tau_3=13\,9\,10$, $\pix(\pi)=4$ and
   $$\lec(\pi)=\inv(12\,2\,5\,11\,15)\\+\inv(8\,6\,7)+\inv(13\,9\,10)=7.$$

 Let ${\mathcal A}_{0}, {\mathcal A}_{1}, . . . , {\mathcal A}_{r}$ be a series of sets on $\N$. Denote by $\inv({\mathcal A}_{0}, {\mathcal A}_{1}, . . . , {\mathcal A}_{r})$ the number of pairs $(k, l)$ such that $k\in {\mathcal A}_{i}$, $l\in {\mathcal A}_j$, $k> l$ and $i< j$. We usually write $\cont({\mathcal A})$  the set of all letters in a word ${\mathcal A}$. So we have
$
 (\inv-\lec)\pi=\inv(\cont(p), \cont(\tau_{1}),\ldots,\cont(\tau_{r}))
$
if $p\tau_{1}\tau_{2}. . . \tau_{r}$ is the hook factorization of $\pi$.

From Foata and Han~\cite[Theorem 1.4]{fh2},
we derive the  following combinatorial interpretations of the $(q,r)$-Eulerian polynomials
\begin{align}\label{pix-v0}
A_{n}(t,r,q)
=\sum_{\pi\in \S_n}t^{\lec\pi}r^{\pix\pi}q^{(\inv-\lec)\pi}.
\end{align}
Therefore
\begin{equation}\label{pix-v}
A_{n,k}(q)=\sum_{\pi\in\S_n \atop \lec\pi=k}q^{(\inv-\lec)\pi}\quad\text{and}\quad A_{n,k}^{(j)}(q)=\sum_{\pi\in\S_n \atop {\lec\pi=k\atop\pix\pi=j}}q^{(\inv-\lec)\pi}.
\end{equation}
 It is known~\cite[Proposition~1.3.17]{st0} that  the $q$-binomial coefficient
has the interpretation
\begin{equation}\label{eq:qmul}
{n\brack  k}_{q}=\sum_{({\mathcal A}, {\mathcal B})}q^{\inv({\mathcal A}, {\mathcal B})},
\end{equation}
where the sum is over all ordered partitions $({\mathcal A}, {\mathcal B})$ of $[n]$ such that $|{\mathcal A}|=k$.
We will  give bijective proofs of  \eqref{eq: th17} and \eqref{syfixversion} using the interpretations in~\eqref{pix-v} and~\eqref{eq:qmul}. 
 \begin{remark}\label{bijfh}
In~\cite{fh2}, a bijection on $\S_n$ that carries the triplet $(\fix,\exc,\maj)$ to  $(\pix,\lec,\inv)$ was constructed without being specified. This bijection consists of two steps. The first step   (see \cite[section 6]{fh2}) uses the word analogue of \emph{Kim-Zeng's decomposition}~\cite{kz} and an updated version of \emph{Gessel-Reutenauer standardization}~\cite{gr} to construct a bijection on $\S_n$ that transforms the triplet $(\fix,\exc,\maj)$ to $(\pix,\lec,\imaj)$, where $\imaj(\pi):=\maj(\pi^{-1})$ for each permutation $\pi$. The second step (see \cite[section 7]{fh2}) uses  \emph{Foata's second fundamental transformation}~\cite{fo1}  to carry the triplet  $(\pix,\lec,\imaj)$ to $(\pix,\lec,\inv)$.  In view of this bijection, one can construct bijective proofs of  \eqref{eq: th16}, \eqref{eq: th17} and \eqref{syfixversion} using the original interpretations in~\eqref{majcase},  through the bijective proof of~\eqref{eq: th16} in~\cite{hlz}  and our bijective proofs.    
\end{remark}

To construct our bijective proofs, 
we need two elementary transformations from~\cite{hlz} that we recall now. 
Let $\tau$ be a hook  with $\inv(\tau)=k$ and  $\cont(\tau)=\{x_1, \ldots, x_m\}$, where  $x_1<\ldots <x_m$. 
Define
\begin{equation}\label{def:d}
d(\tau)=x_{m-k+1}x_1\ldots x_{m-k} x_{m-k+2}\ldots x_m.
\end{equation}
Clearly, $d(\tau)$  is the unique hook with  $\cont(d(\tau))=\cont(\tau)$
 and satisfying  
$$
 \inv(d(\tau))=m-k=|\cont(\tau)|-\inv(\tau).
$$
  Let $\tau$ be a hook or an increasing word with $\inv(\tau)=k$ and  $\cont(\tau)=\{x_1, \ldots, x_m\}$, where  $x_1<\ldots <x_m$. 
Define
\begin{equation}\label{def:dd}
d'(\tau)=x_{m-k}x_1\ldots x_{m-k-1} x_{m-k+1}\ldots x_m.
\end{equation}
It is not difficult to see that, $d'(\tau)$  is the unique hook (when $k<m-1$) or increasing word (when $k=m-1$) with $\cont(d(\tau))=\cont(\tau)$ and 
 satisfying  
 $$
 \inv(d(\tau))=m-k-1=|\cont(\tau)|-1-\inv(\tau).
$$
 
 \subsection{ Bijective proof of \eqref{eq: th17}}

Let $\S_n(k)=\{\pi\in\S_n : \pix(\pi)=k\}$ and $\DD_n=\S_n(0)$.  We first notice that the left-hand side of~\eqref{eq: th17} has the following interpretation:
 \begin{equation}\label{pixve}
 \sum_{\pi\in\DD_{n}\atop\lec\pi=a}q^{(\inv-\lec)\pi}=\sum_{k\geq 1}(-1)^{n-k}{n\brack  k}_qq^{{n-k\choose 2}}A_{k,a}(q).
 \end{equation}
 This interpretation follows immediately from~\cite[Corollary 4.4]{sw} and~\eqref{pix-v0}.
One can also give a direct combinatorial proof similarly as in~\cite{wm}. Actually, by \eqref{pix-v} and~\eqref{eq:qmul} we have 
 \begin{align*}\label{open}
 A_{n,a}(q)&=\sum_{\pi\in\S_n \atop \lec\pi=a}q^{(\inv-\lec)\pi}\nonumber\\
 &=\sum_k\sum_{\pi\in\S_n(k) \atop \lec\pi=a}q^{\inv(\cont(p),\cont(\tau_1\ldots\tau_r))+\inv(\cont(\tau_{1}),\cont(\tau_2),\ldots,\cont(\tau_{r}))}\nonumber\\
 &=\sum_{k}\sum_{{\mathcal A}\subseteq[n]\atop|{\mathcal A}|=k}q^{\inv({\mathcal A},[n]\setminus{\mathcal A})}\sum_{\pi\in\DD_{n-k}\atop\lec\pi=a}q^{(\inv-\lec)\pi}\nonumber\\
 &=\sum_k{n\brack k}_q\sum_{\pi\in\DD_{k}\atop\lec\pi=a}q^{(\inv-\lec)\pi}.
 \end{align*}
 Applying  Gaussian inversion (or $q$-binomial inversion) to the above identity we obtain~\eqref{pixve}.

Now, by~\eqref{pixve}, the symmetrical identity \eqref{eq: th17} is equivalent to   
the $j=0$ case of the following Lemma. 
  \begin{lemma}\label{sypix}
 For $0\leq j\leq n$, there is an involution ${\bf v} \mapsto {\bf u}$ on $\S_n(j)$ satisfying 
\begin{align*}
\lec({\bf u})=n-j-\lec({\bf v})\quad \textrm{and}\quad
(\inv-\lec){\bf u}  = (\inv-\lec){\bf v}.
\end{align*}
 \end{lemma}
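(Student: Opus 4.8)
The plan is to construct the involution hook-factorization by hook-factorization, acting on each hook factor via the transformation $d$ defined in~\eqref{def:d}. Given ${\bf v}\in\S_n(j)$ with hook factorization $p\tau_1\tau_2\cdots\tau_r$, where $|p|=\pix({\bf v})=j$, I would define ${\bf u}$ to have hook factorization $p\,d(\tau_1)\,d(\tau_2)\cdots d(\tau_r)$, that is, replace each hook $\tau_i$ by $d(\tau_i)$ while leaving the increasing prefix $p$ untouched. Since $d(\tau_i)$ is again a hook with the same content as $\tau_i$, the concatenation $p\,d(\tau_1)\cdots d(\tau_r)$ is a legitimate hook factorization and the prefix is still the increasing word $p$, so ${\bf u}\in\S_n(j)$ as required.

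\emph{Verifying the statistics.} First, $\pix$ is preserved because $p$ is unchanged, so the map stays on $\S_n(j)$. For the $\lec$ statistic, recall $\lec({\bf v})=\sum_i\inv(\tau_i)$ and by the defining property of $d$ we have $\inv(d(\tau_i))=|\cont(\tau_i)|-\inv(\tau_i)$. Summing over $i$ and using that the hooks $\tau_1,\dots,\tau_r$ partition the $n-j$ letters outside $p$, I get $\lec({\bf u})=\sum_i\bigl(|\cont(\tau_i)|-\inv(\tau_i)\bigr)=(n-j)-\lec({\bf v})$, which is the desired relation. For the $(\inv-\lec)$ statistic, I would use the content-inversion formula stated in the preliminaries: $(\inv-\lec)\pi=\inv(\cont(p),\cont(\tau_1),\dots,\cont(\tau_r))$. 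Since $d(\tau_i)$ has exactly the same content as $\tau_i$, the ordered sequence of content sets is unchanged under the map, and therefore $(\inv-\lec){\bf u}=(\inv-\lec){\bf v}$.

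\emph{Involution.} Finally I would check that the map is an involution. Because $d$ is a content-preserving bijection on hooks with $\inv(d(\tau))=|\cont(\tau)|-\inv(\tau)$, applying $d$ twice gives $\inv(d(d(\tau)))=|\cont(\tau)|-(|\cont(\tau)|-\inv(\tau))=\inv(\tau)$; as a hook is determined by its content together with its inversion number, this forces $d(d(\tau))=\tau$. Hence the hook-by-hook map squares to the identity, and ${\bf v}\mapsto{\bf u}$ is the required involution.

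The construction itself is routine once the transformation $d$ is in hand; the only point requiring genuine care is the bookkeeping that guarantees $p\,d(\tau_1)\cdots d(\tau_r)$ really is the hook factorization of a permutation, rather than some other factorization into hooks. The potential obstacle is that one must confirm the prefix $p$ together with the new hooks does not allow any leading letter of some $d(\tau_i)$ to be absorbed into the preceding increasing run, which could alter $\pix$ or the factor boundaries. Since each $d(\tau_i)$ begins with a descent (it is a hook, so its first letter exceeds its second), the factorization boundaries are preserved and $\pix$ is unaffected; verifying this is the one step I would write out explicitly.
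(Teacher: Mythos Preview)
Your proposal is correct and follows essentially the same approach as the paper: define ${\bf u}=p\,d(\tau_1)\cdots d(\tau_r)$ and check the properties. The paper's proof is terser (it simply asserts the verification is easy), whereas you spell out the bookkeeping for $\lec$, $(\inv-\lec)$, the involution property, and the preservation of factorization boundaries.
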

 \begin{proof}
 Let ${\bf v}=p\tau_{1}\tau_{2}\ldots \tau_{r}$ be the hook factorization of ${\bf v}\in\S_n(j)$, where $p$ is an increasing word and each factor $\tau_1$, $\tau_2$, \ldots, $\tau_r$ is a hook. We define ${\bf u}=pd(\tau_{1})\ldots d(\tau_{r})$, where $d$ is defined in~\eqref{def:d}. It is easy to check that this mapping is an involution on $\S_n(j)$ with the desired properties.  
 \end{proof}
 
 By~\eqref{pix-v}, Lemma~\ref{sypix} gives a simple bijective proof of the following known~\cite{cg,sw} symmetric property of the fixed point $q$-Eulerian numbers.
 \begin{corollary}\label{th:2} For $n,k,j\geq0$,
\begin{equation}\label{syC}
A_{n,k}^{(j)}(q)=A_{n,n-j-k}^{(j)}(q).
\end{equation} 
\end{corollary}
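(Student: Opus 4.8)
The plan is to read off the corollary directly from Lemma~\ref{sypix} together with the $(\inv,\pix,\lec)$ interpretation recorded in~\eqref{pix-v}. The key observation is that the involution of Lemma~\ref{sypix} already encodes exactly the symmetry asserted in~\eqref{syC}, so the work is essentially to match up the relevant sets and weights.

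First I would rewrite $A_{n,k}^{(j)}(q)$ using~\eqref{pix-v}, so that it becomes the weighted sum
\[
A_{n,k}^{(j)}(q)=\sum_{\pi\in\S_n(j)\atop\lec\pi=k}q^{(\inv-\lec)\pi},
\]
where I have used that the condition $\pix\pi=j$ is precisely the condition $\pi\in\S_n(j)$. Next I would invoke the involution ${\bf v}\mapsto{\bf u}$ of Lemma~\ref{sypix} on $\S_n(j)$. Since this map satisfies $\lec({\bf u})=n-j-\lec({\bf v})$, it sends a permutation with $\lec=k$ to one with $\lec=n-j-k$; being an involution on $\S_n(j)$, it therefore restricts to a bijection
\[
\{\pi\in\S_n(j):\lec\pi=k\}\longrightarrow\{\pi\in\S_n(j):\lec\pi=n-j-k\}.
\]

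Finally, because the involution also satisfies $(\inv-\lec){\bf u}=(\inv-\lec){\bf v}$, the statistic $\inv-\lec$ is preserved, so the summand $q^{(\inv-\lec)\pi}$ is transported unchanged under the bijection. Hence the two weighted sums agree term by term, giving
\[
A_{n,k}^{(j)}(q)=\sum_{\pi\in\S_n(j)\atop\lec\pi=k}q^{(\inv-\lec)\pi}
=\sum_{\pi\in\S_n(j)\atop\lec\pi=n-j-k}q^{(\inv-\lec)\pi}=A_{n,n-j-k}^{(j)}(q),
\]
which is exactly~\eqref{syC}.

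There is no real obstacle here: once the interpretation~\eqref{pix-v} is in place, the corollary is an immediate consequence of the lemma, and the only thing worth checking is the bookkeeping of the $\lec$-value under the involution and the invariance of the weight $q^{(\inv-\lec)}$, both of which are furnished verbatim by Lemma~\ref{sypix}. The one conceptual point I would flag is that this gives a genuinely bijective (rather than generating-function) proof of a symmetry that was previously established by other means in~\cite{cg,sw}.
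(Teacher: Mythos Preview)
Your proof is correct and follows exactly the paper's own approach: the paper simply remarks that, by the interpretation~\eqref{pix-v}, Lemma~\ref{sypix} immediately yields the corollary, and you have spelled out precisely that deduction. There is nothing to add.
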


\subsection{ Bijective proof of \eqref{syfixversion}}
Recall~\cite{hlz} that, for a fixed positive integer $n$, a {\it two-pix-permutation of $[n]$} is
a  sequence of words
 \begin{equation}
\label{eq:defvec}
{\bf v}=(p_1, \tau_1, \tau_2, \ldots, \tau_{r-1}, \tau_{r}, p_2)
\end{equation}
satisfying  the following conditions:
\begin{itemize}
\item[(C1)] $p_1$ and $p_2$ are two increasing words, possibly empty;
\item[(C2)] $\tau_1, \ldots, \tau_r$ are hooks for some positive integer $r$;
\item[(C3)] The concatenation $p_1 \tau_1 \tau_2 \ldots \tau_{r-1} \tau_{r} p_2$ of all components of $\bf v$ is a permutation of $[n]$.
\end{itemize}
We also extend the two statistics to the two-pix-permutations by
\begin{equation*}
\lec ({\bf v}) =\sum_{1\leq i\leq r} \inv(\tau_i)\quad\text{and}\quad
\inv ({\bf v}) =\inv(p_1 \tau_1 \tau_2 \ldots \tau_{r-1} \tau_{r} p_2).
\end{equation*}
It follows  that
\begin{equation}\label{eq:il}
(\inv-\lec) {\bf v} = 
\inv(\cont(p_1), \cont(\tau_1),\cont(\tau_2),\ldots, \cont(\tau_r), \cont(p_2) ).
\end{equation}
Let $\W_n(j)$ denote the set of all two-pix-permutations with $|p_1|=j$.

\begin{lemma} \label{th:3}
Let $a,j$ be fixed nonnegative integers. Then
\begin{equation} \label{eq:comb}
\sum_{{\bf v}\in\W_n(j)\atop\lec{\bf v}=a}q^{(\inv-\lec){\bf v}}=\sum_{k\geq 1}{n\brack k}_qA_{k,a}^{(j)}(q).
\end{equation}
\end{lemma}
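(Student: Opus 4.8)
The plan is to read the right-hand side as a sum over pairs (ordered set partition, permutation) and then match it bijectively, weight by weight, with the two-pix-permutations on the left. Expanding by \eqref{eq:qmul} and \eqref{pix-v},
\begin{equation*}
\sum_{k\geq 1}{n\brack k}_qA_{k,a}^{(j)}(q)
=\sum_{k\geq1}\ \sum_{({\mathcal A},{\mathcal B})\atop|{\mathcal A}|=k}\ \sum_{\pi\in\S_k\atop{\lec\pi=a\atop\pix\pi=j}}q^{\inv({\mathcal A},{\mathcal B})+(\inv-\lec)\pi},
\end{equation*}
where the middle sum ranges over ordered partitions $({\mathcal A},{\mathcal B})$ of $[n]$. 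So it suffices to build a bijection between the two-pix-permutations ${\bf v}\in\W_n(j)$ with $\lec{\bf v}=a$ and the triples $(({\mathcal A},{\mathcal B}),\pi)$ with $\pi\in\S_{|{\mathcal A}|}$, $\lec\pi=a$, $\pix\pi=j$, under which the exponent of $q$ above matches $(\inv-\lec){\bf v}$.

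First I would define the map ${\bf v}\mapsto(({\mathcal A},{\mathcal B}),\pi)$. Given ${\bf v}=(p_1,\tau_1,\ldots,\tau_r,p_2)$, set ${\mathcal A}=\cont(p_1\tau_1\cdots\tau_r)$ and ${\mathcal B}=\cont(p_2)$, and let $\pi\in\S_k$ with $k=|{\mathcal A}|$ be the standardization of the word $p_1\tau_1\cdots\tau_r$. The inverse takes a triple, forms the hook factorization $\pi=p'\tau_1'\cdots\tau_r'$ (so $|p'|=\pix\pi=j$), destandardizes each factor onto ${\mathcal A}$ to recover $p_1,\tau_1,\ldots,\tau_r$, and lets $p_2$ be the increasing rearrangement of ${\mathcal B}$.

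The key verification — which I expect to be the main obstacle — is that the decomposition carried by a two-pix-permutation really is the hook factorization of its standardization, so that $\pix$ and $\lec$ transfer correctly. Because conditions (C1)--(C3) force $p_1$ to be increasing and each $\tau_i$ to be a genuine hook, peeling hooks from the right of $p_1\tau_1\cdots\tau_r$ recovers exactly the blocks $p_1\mid\tau_1\mid\cdots\mid\tau_r$: the peak of $\tau_{i+1}$ closes its hook before the peeling can reach the (maximal) last letter of $\tau_i$, so adjacent blocks never merge and $p_1$ is precisely the increasing prefix. Since standardization is an order isomorphism it preserves this factorization block by block, whence $\pix\pi=|p_1|=j$ and $\lec\pi=\sum_i\inv(\tau_i)=\lec{\bf v}=a$; note that for $a\geq1$ at least one hook is present (each hook contributes a positive number of inversions), matching condition (C2).

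Finally I would split the weight. By the block-inversion formula \eqref{eq:il}, $(\inv-\lec){\bf v}=\inv(\cont(p_1),\ldots,\cont(\tau_r),\cont(p_2))$; separating the pairs lying entirely inside ${\mathcal A}$ from those straddling ${\mathcal A}$ and the terminal block ${\mathcal B}=\cont(p_2)$ gives
\begin{equation*}
(\inv-\lec){\bf v}=(\inv-\lec)\pi+\inv({\mathcal A},{\mathcal B}),
\end{equation*}
because ${\mathcal B}$ sits last and so its letters start no further inversions, while the inner pairs reproduce $(\inv-\lec)\pi$ under standardization. Summing $q^{(\inv-\lec){\bf v}}$ across the bijection then reproduces the displayed triple sum, which is the right-hand side, establishing \eqref{eq:comb}.
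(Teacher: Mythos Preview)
Your argument is correct and follows essentially the same route as the paper: split a two-pix-permutation into the pair $(\sigma,p_2)$ with $\sigma=p_1\tau_1\cdots\tau_r$, and observe that the weight $q^{(\inv-\lec){\bf v}}$ factors as $q^{(\inv-\lec)\sigma}\cdot q^{\inv(\cont(\sigma),\cont(p_2))}$, yielding ${n\brack k}_q A_{k,a}^{(j)}(q)$ when $|p_2|=n-k$. The paper simply treats $\sigma$ as a permutation of the subset $[n]\setminus\cont(p_2)$ rather than standardizing to $\S_k$, so your standardization step and the accompanying verification that hook factorization is preserved are more explicit than the paper but not substantively different.
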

\begin{proof}
By the hook factorization, the  two-pix-permutation 
in~\eqref{eq:defvec}
is in bijection with the pair 
$(\sigma, p_2)$, 
where
$\sigma= p_1 \tau_1 \tau_2 \ldots \tau_{r-1} \tau_{r}$ is a permutation on $[n]\setminus\cont(p_2)$ and $p_2$  is an increasing word.  Thus, by \eqref{pix-v}, \eqref{eq:qmul} and \eqref{eq:il}, 
the generating function of all two-pix-permutations ${\bf v}$ 
of $[n]$ with $|p_1|=j$ such that $\lec({\bf v})=a$ and $|p_2|=n-k$ with respect to the weight
$q^{(\inv-\lec){\bf v}}$ is
${n\brack n-k}_qA_{k,a}^{(j)}(q)$.
\end{proof}

\begin{lemma}\label{th: 4}
Let $j$ be a fixed nonnegative integer. Then
there is an involution ${\bf v} \mapsto {\bf u}$ 
on $\W_n(j)$ satisfying
\begin{align*}
\lec ({\bf v}) = n -j- 1 - \lec ({\bf u}),\quad \textrm{and}\quad
(\inv-\lec) {\bf v} = (\inv-\lec) {\bf u}.
\end{align*}
\end{lemma}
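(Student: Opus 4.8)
The plan is to build the involution by hand out of the two elementary maps $d$ and $d'$ of \eqref{def:d} and \eqref{def:dd}, exploiting the crucial fact recorded in \eqref{eq:il}: the statistic $(\inv-\lec)$ of a two-pix-permutation depends only on the ordered sequence of content sets $(\cont(p_1),\cont(\tau_1),\ldots,\cont(\tau_r),\cont(p_2))$. Consequently, any modification of ${\bf v}$ that rearranges letters inside the individual factors while leaving every content set fixed \emph{in its position} automatically preserves $(\inv-\lec)$; the entire problem then reduces to redistributing the inversion numbers among the factors so that $\lec$ is reflected to $n-j-1-\lec$.

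Concretely, given ${\bf v}=(p_1,\tau_1,\ldots,\tau_r,p_2)\in\W_n(j)$, I would single out the last nonempty factor $L$ of the subsequence $\tau_1,\ldots,\tau_r,p_2$ (so $L=p_2$ when $p_2\neq\emptyset$, and $L=\tau_r$ otherwise), apply $d$ to every hook strictly preceding $L$, and apply $d'$ to $L$ itself. Since $d$ replaces a hook $\tau_i$ by the hook on the same content with $\inv(\tau_i)$ turned into $|\cont(\tau_i)|-\inv(\tau_i)$, and $d'$ replaces a hook-or-increasing-word on a content set of size $m$ by the hook-or-increasing-word on that same content with inversion number $m-1-\inv$, every content set is preserved together with its position in the sequence, so $(\inv-\lec){\bf u}=(\inv-\lec){\bf v}$ is immediate. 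The point of using $d'$ here is precisely that it toggles the \emph{type} of the last factor: when the inversion number of $L$ is extremal it converts a maximal terminal hook into an increasing word (absorbed into $p_2$, lowering $r$ by one) or, conversely, promotes the trailing increasing word to a fresh terminal hook (raising $r$ by one). Thus the construction passes back and forth between the ``$p_2$ empty'' and ``$p_2$ nonempty'' regimes.

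It remains to verify the three assertions. The map is an involution because $d$ is an involution on hooks of fixed content, $d'$ is an involution on the hooks-and-increasing-words of fixed content, and the designation of $L$ as the last nonempty factor is recovered after one step: the new last nonempty factor is exactly $d'(L)$, whose $d'$-image is again $L$, while the preceding hooks are the $d(\tau_i)$, whose $d$-images return the $\tau_i$. For the $\lec$-reflection, write $N:=n-j=\sum_i|\cont(\tau_i)|+|\cont(p_2)|$; each preceding hook then contributes $|\cont(\tau_i)|-\inv(\tau_i)$ to $\lec({\bf u})$ and the transformed last factor contributes $|\cont(L)|-1-\inv(L)$, whence $\lec({\bf u})=N-1-\lec({\bf v})=n-j-1-\lec({\bf v})$ in both regimes. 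Combined with Lemma~\ref{th:3} and the Gaussian interpretation \eqref{eq:qmul}, this delivers the symmetric identity \eqref{syfixversion} upon setting $n=a+b+j+1$.

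The main obstacle I anticipate is purely the boundary bookkeeping. One must check carefully that $d'$ genuinely interchanges the two types of terminal factor at the extreme inversion values (this is exactly where it matters that $d'$, unlike $d$, is defined on increasing words as well as hooks), that self-inverseness survives the accompanying change in the number $r$ of hooks, and that the degenerate configurations — an empty or single-letter last factor, or the case where the trailing increasing word exhausts everything after $p_1$ (so that the image has no hooks at all) — are consistent with the conventions defining $\W_n(j)$, so that no element is ever sent outside $\W_n(j)$. Reconciling these edge cases is the only delicate part; the statistic computations themselves are the routine reflections recorded above.
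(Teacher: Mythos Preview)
Your construction is exactly the one in the paper: apply $d$ to every hook preceding the last nonempty factor among $\tau_1,\ldots,\tau_r,p_2$, and apply $d'$ to that last factor. The verification of the involution property, the $\lec$-reflection via $N=n-j$, and the preservation of $(\inv-\lec)$ through \eqref{eq:il} also match the paper's argument; the boundary concerns you flag are precisely the routine checks the paper leaves implicit.
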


\begin{proof}
We give an explicit construction of  the bijection using the involutions $d$ and $d'$ defined in~\eqref{def:d} and~\eqref{def:dd}. 

Let
${\bf v}=(p_1, \tau_1, \tau_2, \ldots, \tau_{r-1}, \tau_{r}, p_2)$ be a two-pix-permutation of $[n]$ with $|p_1|=j$.
If $p_2\neq\emptyset$, then 
$$
{\bf u}=(p_1, d(\tau_1), d(\tau_2), \ldots, d(\tau_{r-1}), d(\tau_{r}), d'(p_2)),
$$
otherwise, 
$$
{\bf u}=(p_1, d(\tau_1), d(\tau_2), \ldots, d(\tau_{r-1}), d'(\tau_{r})).
$$
As $d$ and $d'$ are involutions, this mapping is an involution on $\W_n(j)$.

Since we have $\lec (d(\tau_i)) = |\cont(\tau_i)| - \lec(\tau_i)$ for $1\leq i\leq r$ and
$\lec (d'(p_2)) = |\cont(p_2)|-1$ in the  case $p_2\neq\emptyset$,
it follows that
$\lec ({\bf u}) = \sum_{i=1}^r  |\cont(\tau_i)| + |\cont(p_2)|-1 - \lec ({\bf v})
=n-j-1 - \lec ({\bf v}).
$
The above identity is also valid when $p_2=\emptyset$.

Finally it follows from \eqref{eq:il}  that $(\inv-\lec){\bf u}=(\inv-\lec){\bf v}$. This finishes the proof of the lemma.
\end{proof}

Combining Lemmas~\ref{th:3} and \ref{th: 4} we obtain a bijective proof of \eqref{syfixversion}.

\section{A  new recurrence formula for the $(q,r)$-Eulerian polynomials}
\label{sec:rec}

The \emph{Eulerian differential operator} $\delta_x$ used below is defined by
$$
\delta_x(f(x)):=\frac{f(x)-f(qx)}{x},
$$
for any $f(x)\in\Q[q][[x]]$ in the ring of formal power series in $x$ over $\Q[q]$ (instead of the traditional $(f(x)-f(qx))/((1-q)x)$, see \cite{an2,ch}). 
We need the  following elementary properties of $\delta_x$. 
\begin{lemma}\label{delta}
For any $f(x),g(x)\in\Q[q][[x]]$,
\begin{equation*}
\delta_x(f(x)g(x))=f(qx)\delta(g(x))+\delta(f(x))g(x)
\end{equation*}
and
\begin{equation*}
\delta_x\left(\frac{1}{f(x)}\right)=\frac{-\delta_x(f(x))}{f(qx)f(x)}\quad(\text{$f(x)\neq0$}).
\end{equation*}
\end{lemma}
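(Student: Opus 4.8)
The plan is to verify both identities by direct computation from the definition $\delta_x(f(x))=\bigl(f(x)-f(qx)\bigr)/x$, since each reduces to an elementary manipulation of difference quotients in the ring $\Q[q][[x]]$.

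For the product rule, I would begin from
$$
\delta_x(f(x)g(x))=\frac{f(x)g(x)-f(qx)g(qx)}{x}
$$
and insert the cross term $f(qx)g(x)$ in the numerator, using the algebraic identity
$$
f(x)g(x)-f(qx)g(qx)=\bigl(f(x)-f(qx)\bigr)g(x)+f(qx)\bigl(g(x)-g(qx)\bigr).
$$
Dividing by $x$ and recognizing the two resulting difference quotients as $\delta_x(f(x))$ and $\delta_x(g(x))$ yields exactly the claimed Leibniz-type rule. The asymmetry of the formula (the factor $f(qx)$, rather than $f(x)$, multiplying $\delta_x(g(x))$) is precisely what this grouping produces, and it reflects the $q$-shift built into $\delta_x$; the alternative grouping would instead place the shift on the other factor.

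For the reciprocal rule, I would put the two terms of
$$
\delta_x\!\left(\frac{1}{f(x)}\right)=\frac{1}{x}\left(\frac{1}{f(x)}-\frac{1}{f(qx)}\right)
$$
over the common denominator $f(x)f(qx)$, giving
$$
\frac{f(qx)-f(x)}{x\,f(x)f(qx)}=\frac{-\bigl(f(x)-f(qx)\bigr)}{x\,f(x)f(qx)}=\frac{-\delta_x(f(x))}{f(x)f(qx)},
$$
which is the desired formula.

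There is essentially no genuine obstacle here, as both statements are formal consequences of the definition. The only point deserving a word of care is the ambient ring: all manipulations take place in $\Q[q][[x]]$, so for the reciprocal rule I would note that $1/f(x)$ is a well-defined formal power series precisely when $f(x)$ has an invertible constant term, which is the content of the hypothesis $f(x)\neq0$. Granting this, $f(qx)$ is invertible as well, so the division by $f(x)f(qx)$ is legitimate and all displayed equalities hold in $\Q[q][[x]]$.
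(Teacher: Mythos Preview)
Your proposal is correct, and the paper itself states this lemma without proof, treating both identities as elementary consequences of the definition; your direct verification via the cross-term $f(qx)g(x)$ and the common-denominator computation is exactly the natural argument one would supply. The only minor quibble is that in $\Q[q][[x]]$ the literal hypothesis ``$f(x)\neq 0$'' does not guarantee invertibility (take $f(x)=x$), but you correctly identify that what is actually required is an invertible constant term, and the paper's applications of the lemma only involve such $f$.
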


\begin{proof}[{\bf Proof of Theorem~\ref{recu}}]
 It is not difficult to show that, for any variable $a$
\begin{equation*} 
\delta_z(e(az;q))=ae(az;q).
\end{equation*}
Now, applying $\delta_z$ to both sides of~\eqref{fixversion} and using the above property and Lemma~\ref{delta}, we obtain
\begin{align*}
&\sum_{n\geq0}A_{n+1}(t,r,q)\frac{z^n}{(q;q)_n}\\
=&\delta_z\left(\frac{(1-t)e(rz; q)}{e(tz; q)-te(z;q)}\right)\\
=&\delta_z((1-t)e(rz; q))(e(tz; q)-te(z;q))^{-1}+\delta_z\left((e(tz; q)-te(z;q))^{-1}\right)(1-t)e(rzq; q)\\
=&\frac{r(1-t)e(rz; q)}{e(tz; q)-te(z;q)}+\frac{(1-t)e(rzq; q)(te(z; q)-te(tz; q))}{(e(tqz; q)-te(qz;q))(e(tz; q)-te(z;q))}\\
=&r\sum_{n\geq0}A_{n}(t,r,q)\frac{z^n}{(q;q)_n}+t\left(\sum_{n\geq0}A_{n}(t,r,q)\frac{(qz)^n}{(q;q)_n}\right)\left(\sum_{n\geq1}A_{n}(t,q)\frac{z^n}{(q;q)_n}\right).
\end{align*}
Taking the coefficient of $\frac{z^n}{(q;q)_n}$ in both sides of  the above equality, we get~\eqref{recurrence2}. 
\end{proof}

\begin{remark}
A different recurrence formula for $A_{n}(t,r,q)$ was obtained in \cite[Corollary 4.3]{sw}. Eq.~\eqref{recurrence2} is similar to two recurrence formulas in the literature: one for the $(\inv,\des)$-$q$-Eulerian polynomials in~\cite[Corollary 2.22]{pa} (see also~\cite{ch}) and the other one for the $(\maj,\des)$-$q$-Eulerian polynomials in~\cite[Corollary 3.6]{pa}.
 \end{remark}
 
 We shall give another interpretation of $A_n(t,r,q)$ in the following.
  
  Let $\pi\in\S_n$. Recall that an \emph{inversion} of $\pi$ is a pair $(\pi(i),\pi(j))$ such that $1\leq i<j\leq n$ and $\pi(i)>\pi(j)$. An \emph{admissible inversion} of $\pi$ is an inversion $(\pi(i),\pi(j))$ that satisfies either
 \begin{itemize}
 \item $1<i$ and $\pi(i-1)<\pi(i)$ or 
 \item there is some $l$ such that $i<l<j$ and $\pi(i)<\pi(l)$.
 \end{itemize}
  We write $\ai(\pi)$ the number of admissible inversions of $\pi$. Define the statistic
  $$
  \aid(\pi):=\ai(\pi)+\des(\pi).
  $$
 For example, if $\pi=42153$ then there are $5$ inversions, but only $(4,3)$ and $(5,3)$ are admissible. So $\inv(\pi)=5$, $\ai(\pi)=2$ and $\aid(\pi)=2+3=5$.
 The statistics $\ai$ and $\aid$ were first studied by Shareshian and Wachs~\cite{sw2} in the context of Poset Topology. Here we follow the definitions in~\cite{lsw}. The curious result that the pairs $(\aid,\des)$ and $(\maj,\exc)$ are equidistributed on $\S_n$ was proved in~\cite{lsw} using techniques of Rees products and lexicographic shellability.

  Let  ${\mathcal W}$ be the set of all the words on $\N$. We define a new statistic, denoted by ``$\rix$", on ${\mathcal W}$ recursively.  Let $W=w_1w_2\cdots w_n$ be a word in ${\mathcal W}$ and $w_i$ be the rightmost maximum element of $W$. We define $\rix(W)$ by (with convention that $\rix(\emptyset)=0$)
$$ \rix(W):=
 \begin{cases}
0,&\text{if $i=1\neq n$,}\\
1+\rix(w_1\cdots w_{n-1}),&\text{if $i=n$},\\
\rix(w_{i+1}w_{i+2}\cdots w_n),& \text{if $1<i<n$.}
\end{cases}
$$
 For example, we have $\rix(1\,5\,2\,4\,3\,3\,5)=1+\rix(1\,5\,2\,4\,3\,3)=1+\rix(2\,4\,3\,3)=1+\rix(3\,3)=2+\rix(3)=3.$
 As every permutation can be viewed as a word on $\N$, this statistic is well-defined on permutations.

We write 
$\S_n^{(j)}$ the set of permutations $\pi\in\S_n$ with $\pi(j)=n$. 
For $n\geq1$ and $1\leq j\leq n$, we define
$
B_{n}(t,r,q):=\sum_{\pi\in \S_n}t^{\des\,\pi}r^{\rix\,\pi}q^{\ai\,\pi}
$
and its restricted version by
\begin{equation}\label{restrict2}
B_{n}^{(j)}(t,r,q):=\sum_{\pi\in \S_n^{(j)}}t^{\des\,\pi}r^{\rix\,\pi}q^{\ai\,\pi}.
\end{equation}
We should note here that the restricted $q$-Eulerian polynomial $B_n^{(j)}(t,q)$ is  some modification of $B_{n}^{(j)}(t,1,q)$, as will be shown in the next section.

\begin{theorem} \label{aiddes} We have the following interpretation for $(q,r)$-Eulerian polynomials:
\begin{equation}\label{B}
A_n(t,r,q)=\sum_{\pi\in \S_n}t^{\des\,\pi}r^{\rix\,\pi}q^{\ai\,\pi}.
\end{equation}
\end{theorem}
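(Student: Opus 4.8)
The plan is to show that the polynomial $B_n(t,r,q):=\sum_{\pi\in\S_n}t^{\des\pi}r^{\rix\pi}q^{\ai\pi}$ obeys exactly the recurrence of Theorem~\ref{recu}, with the same initial value, and then to conclude $B_n(t,r,q)=A_n(t,r,q)$ by induction on $n$. Since $\des$, $\ai$ and $\rix$ are all invariant under standardization of a word with distinct letters, I would first record that for any word $W$ with distinct entries the weight $t^{\des W}r^{\rix W}q^{\ai W}$ equals that of the permutation in $\S_{|W|}$ order-isomorphic to $W$; this lets me sum over permutations of an arbitrary content set. The base case is immediate: for $n=1$ the single permutation has $\des=\ai=0$ and $\rix=1$, so $B_1(t,r,q)=r=A_1(t,r,q)$.

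For the inductive step I would partition $\S_{n+1}$ according to the position $i$ of the largest letter $n+1$, writing $B_{n+1}(t,r,q)=\sum_{i=1}^{n+1}\sum_{\pi\in\S_{n+1}^{(i)}}t^{\des\pi}r^{\rix\pi}q^{\ai\pi}$, and analyze three regimes dictated by the recursive definition of $\rix$. When $i=n+1$ (the letter $n+1$ is last), deleting it changes neither $\des$ nor $\ai$, while $\rix(\pi)=1+\rix(\pi')$ for the resulting $\pi'\in\S_n$; this regime contributes $rB_n(t,r,q)$. When $i=1$, position $1$ is a descent, $\rix(\pi)=0$, the letter $n+1$ creates no admissible inversion (both admissibility clauses fail at the leading position), and admissible inversions among the remaining letters are unchanged, so this regime contributes $tB_n(t,1,q)$. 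When $1<i<n+1$, writing $\pi=P\,(n+1)\,S$ with $P,S$ nonempty, I expect to prove the key identities $\des\pi=\des P+\des S+1$, $\rix\pi=\rix S$, and $\ai\pi=\ai P+\ai S+|S|+\inv(\cont P,\cont S)$.

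The heart of the argument, and the step I expect to be most delicate, is the admissible-inversion count in the middle regime. The point is that the maximal letter $n+1$, sitting at position $i>1$ with a smaller letter to its left, makes the first admissibility clause hold for every inversion $(n+1,s)$ with $s\in S$, producing exactly $|S|$ admissible inversions; and that same letter, being larger than every prefix letter and lying strictly between the two positions, serves as the witness required by the second admissibility clause, rendering every prefix–suffix inversion $(p,s)$ with $p>s$ admissible and giving the term $\inv(\cont P,\cont S)$. I would also check that admissible inversions internal to $P$ and to $S$ are unaffected by the surrounding letters. Once these identities are in hand, grouping by $|S|=m$ and by the content of $S$, and using standardization together with the $q$-binomial interpretation~\eqref{eq:qmul} to sum $q^{\inv(\cont P,\cont S)}$ to ${n\brack m}_q$, collapses the middle regime to $t\sum_{m=1}^{n-1}{n\brack m}_q q^{m}B_m(t,r,q)B_{n-m}(t,1,q)$. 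Adding the three contributions reproduces the recurrence of Theorem~\ref{recu} verbatim, with $B$ in place of $A$ and $B_k(t,1,q)$ in place of $A_k(t,q)$, so the inductive hypothesis $B_k=A_k$ for $k\le n$ (which also yields $B_k(t,1,q)=A_k(t,q)$) forces $B_{n+1}=A_{n+1}$. Alternatively, the $r=1$ specialization $B_n(t,1,q)=A_n(t,q)$ can be obtained directly from the equidistribution of $(\aid,\des)$ and $(\maj,\exc)$ established in~\cite{lsw}.
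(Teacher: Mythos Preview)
Your proposal is correct and follows essentially the same route as the paper: define $B_n(t,r,q)$, split $\S_{n+1}$ according to the position of $n+1$, establish the identities $\des\pi=\des P+\des S+1$, $\rix\pi=\rix S$, and $\ai\pi=\ai P+\ai S+|S|+\inv(\cont P,\cont S)$ in the middle regime, collapse via~\eqref{eq:qmul}, and match the recurrence of Theorem~\ref{recu}. The paper records the admissible-inversion identity as $\ai(\pi)=\ai(\pi_1)+\ai(\pi_2)+\inv(W,[n-1]\setminus W)+n-j$, which is your formula in different notation; your supplementary remark about recovering $B_n(t,1,q)=A_n(t,q)$ from~\cite{lsw} is a harmless aside not used in the paper.
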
 
 \begin{proof} 
 We will show that $B_{n}(t,r,q)$ satisfies the same recurrence formula and initial condition as $A_{n}(t,r,q)$.
 For $n\geq1$, it is clear from the definition of $B_n(t,r,q)$ that 
 \begin{align}\label{sumB}
B_{n+1}(t,r,q)=\sum_{1\leq j\leq n+1}B_{n+1}^{(j)}(t,r,q).
 \end{align}
It is easy to see that
\begin{equation}\label{1}
B_{n+1}^{(1)}(t,r,q)=tB_n(t,1,q)\quad\text{and}\quad B_{n+1}^{(n+1)}(t,r,q)=rB_n(t,r,q).
\end{equation}
We then consider $B_{n+1}^{(j)}(t,r,q)$ for the case of $1<j<n+1$.

For a set $X$, we denote by ${X \choose m}$ the $m$-element subsets of $X$ and $\S_X$ the set of permutations of $X$.
Let $\W(n,j)$ be the set of all triples $(W,\pi_1,\pi_2)$ such that $W\in{[n] \choose j}$ and $\pi_1\in\S_W,\pi_2\in\S_{[n]\setminus W}$.   
It is not difficult to see that the mapping $\pi\mapsto (W,\pi_1,\pi_2)$ defined by
\begin{itemize}
\item $W=\{\pi(i) : 1\leq i\leq j-1\}$,
\item $\pi_1=\pi(1)\pi(2)\cdots\pi(j-1)$ and $\pi_2=\pi(j+1)\pi(j+2)\cdots\pi(n)$
\end{itemize}
is a bijection between $\S_n^{(j)}$ and $\W(n-1,j-1)$ and satisfies 
\begin{equation*}
\des(\pi)=\des(\pi_1)+\des(\pi_2)+1,\quad \rix(\pi)=\rix(\pi_2)
\end{equation*}
and 
$$
\ai(\pi)=\ai(\pi_1)+\ai(\pi_2)+\inv(W,[n-1]\setminus W)+n-j.
$$
Thus, for $1<j<n+1$, we have
\begin{align}\label{resB}
B_{n+1}^{(j)}(t,r,q)
&=\sum_{\pi\in\S_{n+1}^{(j)}}t^{\des\pi}r^{\rix\pi}q^{\ai\pi}\nonumber\\
&=tq^{n+1-j}\sum_{(W,\pi_1,\pi_2)\in\W(n,j-1)}q^{\inv(W,[n]\setminus W)}q^{\ai(\pi_1)}t^{\des(\pi_1)}r^{\rix(\pi_2)}q^{\ai(\pi_2)}t^{\des(\pi_2)}\nonumber\\
&=tq^{n+1-j}\sum_{W\in{[n]\choose j-1}}q^{\inv(W,[n]\setminus W)}\sum_{\pi\in\S_W}q^{\ai(\pi_1)}t^{\des(\pi_1)}\sum_{\pi_2\in\S_{[n]\setminus W}}r^{\rix(\pi_2)}q^{\ai(\pi_2)}t^{\des(\pi_2)}\nonumber\\
&=tq^{n+1-j}{n\brack j-1}_qB_{j-1}(t,1,q)B_{n+1-j}(t,r,q),
\end{align}
where we apply \eqref{eq:qmul} to the last equality. Substituting~\eqref{1} and~\eqref{resB} into~\eqref{sumB} we obtain
$$
B_{n+1}(t,r,q)=rB_n(t,r,q)+tB_n(t,1,q)+t\sum_{j=1}^{n-1}{n\brack j}_qq^jB_j(t,r,q)B_{n-j}(t,1,q).
$$
By Theorem~\ref{recu}, $B_n(t,r,q)$ and $A_n(t,r,q)$ satisfy the same recurrence formula and initial condition, thus $B_n(t,r,q)=A_n(t,r,q)$.  This finishes the proof of the theorem.
\end{proof}

It follows from~\eqref{fix-v},~\eqref{pix-v0} and~\eqref{B} that 

\begin{corollary}
The three triplets $(\rix,\des,\aid)$, $(\fix,\exc,\maj)$ and $(\pix,\lec,\inv)$ are equidistributed on $\S_n$.
\end{corollary}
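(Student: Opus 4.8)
The plan is to read off the corollary directly from the three generating-function identities \eqref{fix-v}, \eqref{pix-v0} and \eqref{B}, since each of them expresses the \emph{same} polynomial $A_n(t,r,q)$. Comparing the three right-hand sides shows at once that the triples $(\fix,\exc,\maj-\exc)$, $(\pix,\lec,\inv-\lec)$ and $(\rix,\des,\ai)$ are equidistributed on $\S_n$, where in every case the variable $r$ marks the first statistic, $t$ marks the second, and $q$ marks the third. The only remaining task is to convert these ``difference'' triples into the triples named in the corollary.

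The key observation is that in each of the three cases the third statistic of the target triple is obtained by adding the second statistic to the third statistic of the source triple:
\begin{equation*}
\maj=(\maj-\exc)+\exc,\qquad \inv=(\inv-\lec)+\lec,\qquad \aid=\ai+\des.
\end{equation*}
Hence a single substitution handles all three cases simultaneously. First I would replace $t$ by $tq$ throughout. For instance, in \eqref{fix-v} this yields
$$
A_n(tq,r,q)=\sum_{\pi\in\S_n}(tq)^{\exc\,\pi}r^{\fix\,\pi}q^{(\maj-\exc)\,\pi}=\sum_{\pi\in\S_n}t^{\exc\,\pi}r^{\fix\,\pi}q^{\maj\,\pi},
$$
and the same substitution applied to \eqref{pix-v0} and \eqref{B} produces $\sum_{\pi}t^{\lec\,\pi}r^{\pix\,\pi}q^{\inv\,\pi}$ and $\sum_{\pi}t^{\des\,\pi}r^{\rix\,\pi}q^{\aid\,\pi}$, respectively.

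Since all three resulting sums equal the single polynomial $A_n(tq,r,q)$, the coefficient of each monomial $r^{i}t^{k}q^{m}$ agrees across them, which is precisely the asserted equidistribution of $(\fix,\exc,\maj)$, $(\pix,\lec,\inv)$ and $(\rix,\des,\aid)$ on $\S_n$. There is no genuine obstacle at this stage: the entire argument reduces to the bookkeeping substitution $t\mapsto tq$, and the real content of the corollary resides in the three interpretations already in hand, with Theorem~\ref{aiddes} supplying the new identity \eqref{B}. If anything, the only point deserving a word of care is to record explicitly that the $r,t,q$ exponents correspond to the first, second and third coordinates of each triple in the same order, so that the comparison of coefficients indeed gives equality of the joint distributions rather than merely of some marginals.
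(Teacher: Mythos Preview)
Your argument is correct and matches the paper's own reasoning: the paper simply says the corollary ``follows from~\eqref{fix-v},~\eqref{pix-v0} and~\eqref{B}'' without further elaboration, and you have spelled out exactly the intended deduction (including the passage from the difference statistics to $\maj$, $\inv$, $\aid$ via the substitution $t\mapsto tq$, or equivalently by noting that the third coordinate of each target triple is a function of the corresponding source triple).
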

\begin{remark}
 At the \emph{Permutation Patterns 2012} conference, Alexander Burstein~\cite{ab} gave a direct bijection on $\S_n$ that transforms the triple $(\rix,\des,\aid)$ to $(\pix,\lec,\inv)$. The new statistic ``$\rix$" was introduced independently therein under the name ``\aix". Actually, the definitions of both are slightly different, but they are the same up to an easy transformation.
  It would be very interesting to find a similar bijective proof of the equidistribution of  $(\rix,\des,\aid)$ and $(\fix,\exc,\maj)$. See also Remark~\ref{bijfh}.
 \end{remark}

\section{A symmetrical identity for restricted $q$-Eulerian polynomials}
\label{sec:sym-iden}

\subsection{An interpretation of $B_{n,k}^{(j)}(q)$ and a proof of Theorem~\ref{th:main1}}

It follows from~\eqref{fixversion} and~\eqref{restrict}  that $B_{1,0}^{(1)}(q)=1$ and $B_{n,k}^{(1)}(q)=A_{n-1,k-1}(q)$ for $k\geq1$. For $j\geq2$, we have the following interpretation for $B_{n,k}^{(j)}(q)$.

\begin{lemma} \label{le:4} 
For $2\leq j\leq n$,
$B_{n,k}^{(j)}(q)=\sum_{\pi\in\S_n^{(j)}\atop\des(\pi)=k}q^{\ai(\pi)+2j-n-1}.$
\end{lemma}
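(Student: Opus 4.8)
The plan is to connect the generating-function definition of $B_n^{(j)}(t,q)$ in~\eqref{restrict} with the combinatorial polynomial $B_n^{(j)}(t,r,q)$ from~\eqref{restrict2}, using the key fact established in Theorem~\ref{aiddes} that $A_n(t,r,q)=\sum_{\pi\in\S_n}t^{\des\pi}r^{\rix\pi}q^{\ai\pi}$. Specializing $r=1$ gives $A_n(t,q)=A_n(t,1,q)=\sum_{\pi\in\S_n}t^{\des\pi}q^{\ai\pi}$, so that the coefficient $A_{n-1,k-1}(q)=\sum_{\pi\in\S_{n-1},\,\des\pi=k-1}q^{\ai\pi}$. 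My first step is to read off from~\eqref{restrict} what $B_{n,k}^{(j)}(q)$ is as an explicit $q$-polynomial. The factor $A_{j-1}(t,q)(qz)^{j-1}/(q;q)_{j-1}$ contributes the hook/descent data of a permutation of size $j-1$ weighted by $q^{\ai}$, together with the prefactor $q^{j-1}$ from $(qz)^{j-1}$; the quotient $\bigl(e(tz;q)-te(tz;q)\bigr)/\bigl(e(tz;q)-te(z;q)\bigr)$ is the generating-function engine that, by comparison with~\eqref{fixversion}, attaches the ``$t\cdot A_{\bullet}(t,q)$'' tail and accounts for the remaining descent/inversion bookkeeping.

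The cleaner route, which I would pursue in parallel, is to compare~\eqref{restrict} directly against the recurrence~\eqref{resB} appearing in the proof of Theorem~\ref{aiddes}. That identity states, for $1<j<n+1$,
\[
B_{n+1}^{(j)}(t,r,q)=tq^{n+1-j}{n\brack j-1}_qB_{j-1}(t,1,q)B_{n+1-j}(t,r,q),
\]
and reindexing $n+1\mapsto n$ (so the distinguished position $j$ lives in $\S_n$) turns this into an exact product formula for $B_{n}^{(j)}(t,r,q)$ in terms of $B_{j-1}(t,1,q)=A_{j-1}(t,q)$ and $B_{n-j+1}(t,r,q)=A_{n-j+1}(t,r,q)$. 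Matching this product against the right-hand side of~\eqref{restrict} is a generating-function calculation: I expand $B_{n-j+1}(t,r,q)$ at $r=1$ via~\eqref{fixversion}/~\eqref{B} and check that the $q$-exponential quotient in~\eqref{restrict} reproduces exactly the sum $\sum_n A_n(t,q)z^n/(q;q)_n$ shifted appropriately, while the prefactor $A_{j-1}(t,q)(qz)^{j-1}/(q;q)_{j-1}$ supplies the ${n\brack j-1}_q A_{j-1}(t,q)$ binomial splitting and the power $q^{j-1}$.

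The heart of the proof is therefore reconciling the exponents of $q$. From~\eqref{resB} the combinatorial polynomial $B_{n}^{(j)}(t,r,q)=\sum_{\pi\in\S_n^{(j)}}t^{\des\pi}r^{\rix\pi}q^{\ai\pi}$ carries the raw statistic $\ai(\pi)$, whereas~\eqref{restrict} is normalized so that its coefficients $B_{n,k}^{(j)}(q)$ come out as genuine $q$-analogues tied to the $(\maj-\exc)$ world. Setting $r=1$ in~\eqref{restrict2} and comparing with the generating-function coefficient $B_{n,k}^{(j)}(q)$, the discrepancy is precisely a global power of $q$; tracking the two sources of this shift—the $q^{n+1-j}$ in~\eqref{resB} and the $q^{j-1}$ from $(qz)^{j-1}$ in~\eqref{restrict}—should give the stated exponent $\ai(\pi)+2j-n-1$. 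Concretely, I expect $B_{n,k}^{(j)}(q)=q^{c}\sum_{\pi\in\S_n^{(j)},\,\des\pi=k}q^{\ai\pi}$ for some constant $c=c(n,j)$ depending only on $n$ and $j$, and the computation will force $c=2j-n-1$. The main obstacle is bookkeeping: carefully aligning the index shifts (the distinguished maximum sits at position $j$ in $\S_n$, but the recurrence~\eqref{resB} is phrased for $\S_{n+1}^{(j)}$) and confirming that the $q$-exponential quotient in~\eqref{restrict} contributes no extra $q$-power beyond what~\eqref{fixversion} already encodes, so that the only surviving normalization is the clean monomial $q^{2j-n-1}$.
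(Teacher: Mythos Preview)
Your approach is essentially the same as the paper's: use the product formula~\eqref{resB} together with Theorem~\ref{aiddes} to compute the exponential generating function $\sum_{n\geq j}q^{2j-n-1}B_n^{(j)}(t,1,q)\frac{z^{n-1}}{(q;q)_{n-1}}$ and verify via~\eqref{fixversion} that it coincides with the right-hand side of~\eqref{restrict}. One small slip: after reindexing $n+1\mapsto n$ in~\eqref{resB} the second factor is $B_{n-j}(t,r,q)$, not $B_{n-j+1}(t,r,q)$; with that correction the $q$-powers $q^{n-j}$ (from~\eqref{resB}) and $q^{j-1}$ (from $(qz)^{j-1}$) combine exactly to force the normalization $q^{2j-n-1}$ you anticipate.
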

\begin{proof}  When $j\geq2$, by the recurrence relation~\eqref{resB}, one can compute without difficulty that the  exponential generating function  $\sum_{n\geq j}q^{2j-n-1}B_n^{(j)}(t,1,q)\frac{z^{n-1}}{(q;q)_{n-1}}$ is exactly the right side of~\eqref{restrict} using~\eqref{fixversion} and~\eqref{B}, which would finish the proof of the lemma.
\end{proof}

Originally,  the \emph{restricted Eulerian number} $B_{n,k}^{(j)}$ in~\cite{cg}  was defined  to be the number of permutations $\pi\in\S_n$ with $\des(\pi)=k$ and $\pi(j)=n$. According to the above lemma, $B_{n,k}^{(j)}(q)$ is really a $q$-analogue of $B_{n,k}^{(j)}$. This justifies the names restricted $q$-Eulerian number and restricted $q$-Eulerian polynomials.

\begin{lemma} \label{reflection}For $1<j<n$, we have 
\begin{equation*}
B_{n,k}^{(j)}(q)=B_{n,n-1-k}^{(j)}(q).
\end{equation*}
\end{lemma}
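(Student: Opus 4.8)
The plan is to exhibit an explicit involution on the set $\S_n^{(j)}$ of permutations fixing $n$ in position $j$, which preserves the admissible inversion statistic $\ai$ while sending $\des(\pi)$ to $n-1-\des(\pi)$. Once such an involution is found, Lemma~\ref{le:4} gives $B_{n,k}^{(j)}(q)=\sum_{\pi\in\S_n^{(j)},\,\des(\pi)=k}q^{\ai(\pi)+2j-n-1}$, and the shift $2j-n-1$ is a constant independent of the permutation within $\S_n^{(j)}$, so an $\ai$-preserving, $\des$-reflecting involution immediately yields the claimed symmetry $B_{n,k}^{(j)}(q)=B_{n,n-1-k}^{(j)}(q)$.

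My first instinct is to reuse the machinery already developed in Section~\ref{sec:bi-pr}: the bijection from the proof of Theorem~\ref{aiddes} identifies $\pi\in\S_n^{(j)}$ with a triple $(W,\pi_1,\pi_2)$ where $\pi_1\in\S_W$, $\pi_2\in\S_{[n]\setminus W}$, $\des(\pi)=\des(\pi_1)+\des(\pi_2)+1$, $\rix(\pi)=\rix(\pi_2)$, and $\ai(\pi)=\ai(\pi_1)+\ai(\pi_2)+\inv(W,[n-1]\setminus W)+n-j$. Since the $\inv(W,\cdot)$ and $n-j$ contributions to $\ai$ depend only on $W$, it suffices to build involutions on $\S_W$ and on $\S_{[n]\setminus W}$ that each preserve $\ai$ while reflecting $\des$ on a block of the appropriate length; reflecting $\des(\pi_1)$ on a $(j-1)$-set and $\des(\pi_2)$ on an $(n-j)$-set would send the total $\des(\pi_1)+\des(\pi_2)+1$ to $(j-2-\des(\pi_1))+(n-j-1-\des(\pi_2))+1=n-1-\des(\pi)$, exactly as required. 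Thus the whole problem reduces to the unconstrained statement: there is an $\ai$-preserving, $\des$-reflecting involution on $\S_m$ for every $m$.

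The cleanest route to that reduced statement is to transport it through the equidistribution already proved. By Theorem~\ref{aiddes} (with $r$ specialized away), the pair $(\des,\ai)$ on $\S_m$ is equidistributed with $(\lec,\inv-\lec)$ via the hook-factorization interpretation, and on the $(\lec,\inv-\lec)$ side the desired reflection is precisely Lemma~\ref{sypix} with $j=0$: the map ${\bf v}=p\tau_1\cdots\tau_r\mapsto p\,d(\tau_1)\cdots d(\tau_r)$ sends $\lec$ to $m-\lec$ while fixing $\inv-\lec$. I would therefore prove Lemma~\ref{reflection} by invoking the interpretation of $B_{n,k}^{(j)}(q)$ from Lemma~\ref{le:4} together with the symmetry $A_{n,k}^{(j)}(q)=A_{n,n-j-k}^{(j)}(q)$ of Corollary~\ref{th:2}, expressing $\sum_{\pi\in\S_n^{(j)}}$ in terms of the statistics whose symmetry is already established rather than constructing a brand-new involution on $\S_n^{(j)}$ directly.

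The main obstacle I anticipate is bookkeeping the constant shift and the three simultaneous reflections so that they combine to give exactly $n-1-k$ rather than some off-by-one variant; in particular one must be careful that $\des(\pi_1)$ ranges over a block of size $j-1$ (so its reflection is $j-2-\des(\pi_1)$) while the ``$+1$'' coming from the forced descent at position $j-1$ is handled correctly, and that the $q$-power shift $2j-n-1$ in Lemma~\ref{le:4} is genuinely constant on $\S_n^{(j)}$ and hence factors out of the sum. A secondary subtlety is verifying that reflecting $\des$ within each block leaves $\ai$ unchanged; this is where I would lean on the hook-factorization transport above, since checking $\ai$-invariance of an involution defined directly on permutations (via $\des$-reflection) is not obviously true without routing through the $(\pix,\lec,\inv)$ side where the involution $d$ is transparent.
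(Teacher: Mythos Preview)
Your reduction is exactly the right decomposition: splitting $\pi\in\S_n^{(j)}$ at the position of $n$ into $(W,\pi_1,\pi_2)$ via the bijection from the proof of Theorem~\ref{aiddes} indeed reduces the claim to showing that $\sum_{\sigma\in\S_m,\,\des\sigma=k}q^{\ai\sigma}$ is invariant under $k\leftrightarrow m-1-k$, i.e.\ that $A_m(t,q)$ is palindromic of center $(m-1)/2$ in $t$. Equivalently, after combining \eqref{resB} with Lemma~\ref{le:4} and Theorem~\ref{aiddes} you get $B_{n,k}^{(j)}(q)=q^{j-1}{n-1\brack j-1}_q\,[t^{k-1}]\,A_{j-1}(t,q)A_{n-j}(t,q)$, and the product of two palindromic polynomials is palindromic. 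So far so good.

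The gap is in how you justify that palindromicity. Lemma~\ref{sypix} does \emph{not} give an involution on $\S_m$ sending $\lec$ to $m-1-\lec$: the map $p\tau_1\cdots\tau_r\mapsto p\,d(\tau_1)\cdots d(\tau_r)$ sends $\lec$ to $m-\pix-\lec$, so it lands in a different level on each $\pix$-stratum, and summing Corollary~\ref{th:2} over $j$ yields $\sum_j A_{m,m-j-k}^{(j)}(q)$, which is not $A_{m,m-1-k}(q)$. Thus neither Lemma~\ref{sypix} ``with $j=0$'' nor Corollary~\ref{th:2} supplies what you need; you must cite the palindromicity $A_{m,k}(q)=A_{m,m-1-k}(q)$ from elsewhere (it is classical, e.g.\ Shareshian--Wachs~\cite{sw}, and the paper itself invokes it later in the proof of Theorem~\ref{th:main1}). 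With that correction your argument is complete and entirely non-bijective.

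The paper takes the complementary route you explicitly set aside: it constructs the ``brand-new'' involution directly. It defines a recursive map $f$ on permutations of any finite set, peeling off the position of the maximum (moving it from front to back, from back to front, or fixing it in the middle and recursing on both sides), and checks by induction that $f$ preserves $\ai$ and sends $\des$ to $m-1-\des$. Then $g(\pi)=f(\pi_1\cdots\pi_{j-1})\,n\,f(\pi_{j+1}\cdots\pi_n)$ is the desired involution on $\S_n^{(j)}$. So both proofs share your Step~(A); for Step~(B) you outsource the symmetry to a known identity, while the paper manufactures a self-contained bijection --- which incidentally furnishes exactly the $\ai$-preserving, $\des$-reflecting involution on $\S_m$ that your transport argument was missing.
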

\begin{proof}
We first construct an involution $f: \pi \mapsto \pi'$ on $\S_n$  satisfying
\begin{equation}\label{prodes}
\ai(\pi)=\ai(\pi')\quad\text{and}\quad\des(\pi)=n-1-\des(\pi').
\end{equation}
For $n=1$, define $f(\id)=\id$. For $n\geq2$, suppose that $\pi=\pi_1\cdots\pi_n$ is a permutation of $\{\pi_1,\cdots,\pi_n\}$ and $\pi_j$ is the maximum element in $\{\pi_1,\cdots,\pi_n\}$. We construct $f$ recursively as follows
$$f(\pi)=
\begin{cases}
f(\pi_2\pi_3\cdots\pi_n)\,\pi_1,&\text{if $j=1$,}\\
\pi_n\,f(\pi_1\pi_2\cdots\pi_{n-1}),&\text{if $j=n$},\\
f(\pi_1\pi_2\cdots\pi_{j-1})\,\pi_j\,f(\pi_{j+1}\pi_{j+2}\cdots\pi_{n}),& \text{otherwise.}
\end{cases}
$$
For example, if $\pi=3\,2\,5\,7\,6\,4\,1$, then
$$
f(\pi)=f(3\,2\,5)\,7\,f(6\,4\,1)=5\,f(3\,2)7\,f(4\,1)\,6=5\,2\,3\,7\,1\,4\,6.
$$
Clearly, 
$\ai(\pi)=7=\ai(\pi')$ and $\des(\pi)=4=7-1-\des(\pi')$.
It is not difficult to see that $f$ is an involution. We can show that $f$ satisfies~\eqref{prodes} by induction on $n$, which is routine and left to the reader.

 For each $\pi=\pi_1\cdots\pi_{j-1}\,n\,\pi_{j+1}\cdots\pi_n$ in $\S_n^{(j)}$, we then define 
$$
g(\pi)=f(\pi_1\cdots\pi_{j-1})\,n\,f(\pi_{j+1}\cdots\pi_n).
$$
As $f$ is an involution, $g$ is an involution on $\S_n^{(j)}$. 
It follows from~\eqref{prodes} that 
$
\ai(g(\pi))=\ai(\pi)
$
and $\des(\pi)=n-1-\des(g(\pi))$,
which completes the proof in view of Lemma~\ref{le:4}.
\end{proof}
\begin{remark}
A bijective proof of Lemma~\ref{reflection} when $q=1$ was given in~\cite{cg}. But their bijection does not preserve the admissible inversions. Supposing that $\pi=\pi_1\cdots\pi_n$ is a permutation of $\{\pi_1,\cdots,\pi_n\}$ and $\pi_j$ is the maximum element in $\{\pi_1,\cdots,\pi_n\}$, we modify $f$ defined above to $f'$ as follows:
$$f'(\pi)=
\begin{cases}
f'(\pi_2\pi_3\cdots\pi_n)\,\pi_1,&\text{if $j=1$,}\\
\pi,&\text{if $j=n$},\\
f'(\pi_1\pi_2\cdots\pi_{j-1})\,\pi_j\,f'(\pi_{j+1}\pi_{j+2}\cdots\pi_{n}),& \text{otherwise.}
\end{cases}
$$
 The reader is invited to check that $f'$ would provide another bijective proof of Corollary~\ref{th:2} using  $(\des,\rix,\ai)$. 
\end{remark}

Now we are in position to give a generating function proof of Theorem~\ref{th:main1}.

\begin{proof}[{\bf Proof of Theorem~\ref{th:main1}}]
We start with the generating function given in~\eqref{restrict}. Multiplying both sides by $e(tz;q)-te(z;q)$, we obtain
$$
\sum_{n,k}B_{n,k}^{(j)}(q)t^k\frac{z^{n-1}}{(q;q)_{n-1}}(e(tz;q)-te(z;q))=\frac{(qz)^{j-1}A_{j-1}(t,q)}{(q;q)_{j-1}}(e(tz;q)-te(tz;q)).
$$
Expanding the exponential functions, we have 
\begin{align*}
\sum_{n,k,i}B_{n,k}^{(j)}(q)\frac{t^{k+i}z^{n+i-1}}{(q;q)_i(q;q)_{n-1}}&-\sum_{n,k,i}B_{n,k}^{(j)}(q)\frac{t^{k+1}z^{n+i-1}}{(q;q)_i(q;q)_{n-1}}\\
&=\frac{(qz)^{j-1}A_{j-1}(t,q)}{(q;q)_{j-1}}\sum_{n\geq0}\frac{(1-t)t^nz^n}{(q;q)_n}.
\end{align*}
Identifying the coefficient of $t^lz^{m-1}$ gives
\begin{align*}
\sum_{k}\frac{B_{m+k-l,k}^{(j)}(q)}{(q;q)_{l-k}(q;q)_{m+k-l-1}}&-\sum_{i}\frac{B_{m-i,l-1}^{(j)}(q)}{(q;q)_i(q;q)_{m-i-1}}\\
&=\frac{q^{j-1}\left(A_{j-1,l+j-m}(q)-A_{j-1,l+j-m-1}(q)\right)}{(q;q)_{j-1}(q;q)_{m-j}}.
\end{align*}
Multiplying both sides by $(q;q)_{m-1}$, we get
\begin{align*}
\sum_kB_{m+k-l,k}^{(j)}(q){m-1\brack l-k}_q&-\sum_iB_{m-i,l-1}^{(j)}(q){m-1\brack i}_q \\
=&(A_{j-1,l+j-m}(q)-A_{j-1,l+j-m-1}(q))q^{j-1}{m-1\brack j-1}_q.
\end{align*}
Changing the variables of the two summations on the left side gives
 \begin{align}\label{16}
\sum_kB_{k,k+l-m}^{(j)}(q){m-1\brack k-1}_q&-\sum_kB_{k,l-1}^{(j)}(q){m-1\brack k-1}_q \nonumber\\
&=(A_{j-1,l+j-m}(q)-A_{j-1,l+j-m-1}(q))q^{j-1}{m-1\brack j-1}_q.
\end{align}
We apply the symmetric property in Lemma~\ref{reflection} to the first summation on the left side of~\eqref{16} and we have
\begin{align}\label{17}
&\sum_kB_{k,k+l-m}^{(j)}(q){m-1\brack k-1}_q\nonumber\\
=&B_{j,j+l-m}^{(j)}(q){m-1\brack j-1}_q
+\sum_{k\neq j}B_{k,m-1-l}^{(j)}(q){m-1\brack k-1}_q.
\end{align}
 It follows from Lemma~\ref{le:4} and Theorem~\ref{aiddes} that
 $$
 B_{n,k}^{(n)}(q)=\sum_{\pi\in\S_n^{(n)}\atop\des(\pi)=k}q^{\ai(\pi)+n-1}=q^{n-1}A_{n-1,k}(q).
 $$ 
 Using the symmetric property of $A_{n,k}(q)$, that is $A_{n,k}(q)=A_{n,n-1-k}(q)$, and the above property, the right side of~\eqref{16} can be treated as follows:
\begin{align}\label{18}
&(A_{j-1,l+j-m}(q)-A_{j-1,l+j-m-1}(q))q^{j-1}{m-1\brack j-1}_q\nonumber\\
=&B_{j,j+l-m}^{(j)}(q){m-1\brack j-1}_q-A_{j-1,m-1-l}(q)q^{j-1}{m-1\brack j-1}_q\nonumber\\
=&B_{j,j+l-m}^{(j)}(q){m-1\brack j-1}_q-B_{j,m-1-l}^{(j)}(q){m-1\brack j-1}_q.
\end{align}
Now we substitute  \eqref{17}, \eqref{18} into \eqref{16} and obtain
\begin{equation*}
\sum_{k}B_{k,m-1-l}^{(j)}(q){m-1\brack k-1}_q=\sum_kB_{k,l-1}^{(j)}(q){m-1\brack k-1}_q,
\end{equation*}
which becomes~\eqref{speci} after setting $m=a+b+2$ and $l-1=b$.
\end{proof}
\begin{remark} \label{remark1} The only case that is left out in Theorem~\ref{th:main1} is the case of $j = 1$. However, as $B_{n,k}^{(1)}(q)=A_{n-1,k-1}(q)$, the corresponding symmetrical identity for this case is~\eqref{eq: th16}.
\end{remark}

\subsection{Another interpretation of $B_{n,k}^{(j)}(q)$ and a bijective proof of Theorem~\ref{th:main1}}
 Let 
 $$\bar{\S}_n^{(j)}:=\{\pi\in\S_n: \pi(j+1)=1\}\,\,\text{for $1\leq j< n$}\quad\text{and}\quad
 \bar{\S}_n^{(n)}:=\{\pi' \square 1: \pi'\in\S_{[n]\setminus\{1\}}\}.$$
  The $``\square"$ in  $\pi=\pi_1\pi_2\cdots\pi_{n-1}\square1\in\bar{\S}_n^{(n)}$ means that the $n$-th position of $\pi$ is empty and the hook factorization of $\pi$ is defined to be $p\tau_1\cdots\tau_r\square1$, where $p\tau_1\cdots\tau_r$ is the hook factorization of $\pi_1\cdots\pi_{n-1}$ and $``\square1"$ is  viewed as a hook.  We also define the statistics
  $$\lec(\pi_1\pi_2\cdots\pi_{n-1}\square1)=\sum_{i=1}^r\lec(\tau_i)\quad\text{and}\quad\inv(\pi_1\pi_2\cdots\pi_{n-1}\square1)=\inv(\pi_1\pi_2\cdots\pi_{n-1}1).$$ For example, $\bar{\S}_3^{(3)}=\{32\square1, 23\square1\}$ with $\lec(32\square1)=1, \lec(23\square1)=0$, $\inv(32\square1)=3$, and $\inv(23\square1)=2$. 
\begin{lemma}\label{le:6}
For $1\leq j\leq n$,
$B_{n,k}^{(j)}(q)=
\sum_{\pi\in\bar{\S}_n^{(j)}\atop\lec(\pi)=k}q^{(\inv-\lec)\pi}.
$
\end{lemma}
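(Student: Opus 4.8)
The plan is to show that the $t$-generating polynomial
$\widetilde{B}_n^{(j)}(t,q):=\sum_{\pi\in\bar{\S}_n^{(j)}}t^{\lec\pi}q^{(\inv-\lec)\pi}$
coincides with $B_n^{(j)}(t,q)=\sum_k B_{n,k}^{(j)}(q)t^k$, which is equivalent to the stated identity after extracting the coefficient of $t^k$. The first step is to read a closed product form for $B_n^{(j)}(t,q)$ off its definition~\eqref{restrict}. Setting $r=t$ in~\eqref{fixversion} gives $\frac{(1-t)e(tz;q)}{e(tz;q)-te(z;q)}=\sum_{m\ge0}A_m(t,t,q)\frac{z^m}{(q;q)_m}$, and since the numerator in~\eqref{restrict} is $e(tz;q)-te(tz;q)=(1-t)e(tz;q)$, the right-hand side of~\eqref{restrict} factors as $\frac{A_{j-1}(t,q)(qz)^{j-1}}{(q;q)_{j-1}}\sum_{m\ge0}A_m(t,t,q)\frac{z^m}{(q;q)_m}$. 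Extracting the coefficient of $z^{n-1}/(q;q)_{n-1}$ yields
\[
B_n^{(j)}(t,q)=q^{j-1}{n-1\brack j-1}_q A_{j-1}(t,q)\,A_{n-j}(t,t,q).
\]

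Next I would realize this same product combinatorially on $\bar{\S}_n^{(j)}$. For $1\le j<n$ the entry $1$ sits in position $j+1$; being the global minimum it must be the valley of its hook, so that hook has peak $\pi(j)$ and begins exactly at position $j$. Hence the hook factorization of $\pi$ splits cleanly after position $j-1$, writing $\pi=A\cdot B$ with $A=\pi(1)\cdots\pi(j-1)$ (standardizing to some $\sigma\in\S_{j-1}$) and $B=\pi(j)\,1\,\pi(j+2)\cdots\pi(n)$ (standardizing to some $\rho\in\bar{\S}_{n-j+1}^{(1)}$, its minimum $1$ landing in position $2$). This is a bijection $\bar{\S}_n^{(j)}\cong\{(\cont A,\sigma,\rho)\}$: the crucial point is that $1$, as a global minimum, can never be the ascending tail of a hook, so the peak $\pi(j)$ cannot be absorbed into the preceding hook and the split is forced in both directions. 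Now $\lec$ is additive over the two blocks, while $(\inv-\lec)$ is additive up to the cross-inversions $\inv(\cont A,\cont B)$; here $1\in\cont B$ contributes exactly $j-1$ forced cross-inversions (one with each larger element of $A$), and summing the remaining cross-inversions over the $(j-1)$-subsets $\cont A\subseteq\{2,\dots,n\}$ produces ${n-1\brack j-1}_q$ by~\eqref{eq:qmul}. With $\sigma$ ranging over $\S_{j-1}$ (giving $A_{j-1}(t,q)$ via~\eqref{pix-v}) this gives
\[
\widetilde{B}_n^{(j)}(t,q)=q^{j-1}{n-1\brack j-1}_q A_{j-1}(t,q)\,\widetilde{B}_{n-j+1}^{(1)}(t,q).
\]

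It then remains to settle the base case $j=1$, namely $\widetilde{B}_m^{(1)}(t,q)=A_{m-1}(t,t,q)$, which I would prove by the explicit bijection $\phi\colon\bar{\S}_m^{(1)}\to\S_{m-1}$ that deletes the entry $1$ (forced into position $2$) and standardizes. The single inversion $(\pi(1),1)$ accounts for the entire drop in $\inv$, so $(\inv-\lec)\pi=(\inv-\lec)\phi(\pi)$, and a short case analysis according to whether $\pi(1)<\pi(3)$ (the first hook of $\pi$ either dissolving into the increasing prefix of $\phi(\pi)$ or surviving as a shorter hook) shows $\lec\pi=\lec\phi(\pi)+1$. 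Thus $\widetilde{B}_m^{(1)}(t,q)=t\,A_{m-1}(t,q)=A_{m-1}(t,t,q)$, the last equality being the $m-1\ge1$ instance of the identity $A_{m-1}(t,t,q)=t\,A_{m-1}(t,q)$, itself a one-line consequence of the two specializations $r=t$ and $r=1$ of~\eqref{fixversion}. The degenerate case $j=n$ is a direct computation: for $\pi'\,\square\,1$ the appended $1$ creates exactly $n-1$ inversions and no new $\lec$, so $\widetilde{B}_n^{(n)}(t,q)=q^{n-1}A_{n-1}(t,q)$, matching the product form with $A_0(t,t,q)=1$. Substituting the base case into the recursion of the previous paragraph reproduces $q^{j-1}{n-1\brack j-1}_q A_{j-1}(t,q)A_{n-j}(t,t,q)=B_n^{(j)}(t,q)$, completing the proof.

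I expect the main obstacle to be precisely the statistic bookkeeping in the $j=1$ bijection: deleting the valley $1$ rewrites the hook factorization near position $2$, and one must verify in each of the two cases that $\lec$ drops by exactly $1$ and that no spurious merging of later hooks occurs at the deletion site. Confirming that the block decomposition for $1<j<n$ is genuinely a bijection (that the hook boundary at position $j$ is respected under reconstruction) is the other delicate point, though it reduces cleanly to the observation that a global minimum forces its hook to start at the immediately preceding position.
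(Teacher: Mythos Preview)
Your proof is correct and follows essentially the same route as the paper. Both arguments hinge on the observation that the hook containing the global minimum $1$ must begin at the position immediately before it, so the hook factorization of $\pi\in\bar{\S}_n^{(j)}$ splits cleanly into a left block on $j-1$ letters (contributing $A_{j-1}(t,q)$) and a right block starting at position $j$, with the cross-inversions producing $q^{j-1}{n-1\brack j-1}_q$.

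The only organizational difference is that the paper handles the right block in one stroke --- noting directly that $\lec(\pi_j 1\pi_{j+2}\cdots\pi_n)=1+\lec(\pi_j\pi_{j+2}\cdots\pi_n)$ and $(\inv-\lec)$ is unchanged --- to obtain $\bar B_n^{(j)}(t,q)=q^{j-1}{n-1\brack j-1}_q A_{j-1}(t,q)\cdot t A_{n-j}(t,q)$, and then checks this against~\eqref{restrict} at the end. You instead extract the closed form $q^{j-1}{n-1\brack j-1}_q A_{j-1}(t,q)A_{n-j}(t,t,q)$ from~\eqref{restrict} first, reduce the combinatorics to the base case $j=1$, and then invoke the auxiliary identity $A_m(t,t,q)=tA_m(t,q)$ to close the loop. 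That last identity is correct (and indeed a one-line generating function check), but it is an unnecessary detour: your bijection $\phi$ already proves $\widetilde B_m^{(1)}(t,q)=tA_{m-1}(t,q)$ directly, which is all you need. The paper's path is marginally shorter for this reason, but the substance is the same.
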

\begin{proof}
Let $\bar{B}_n^{(j)}(t,q):=\sum_{\pi\in\bar{\S}_n^{(j)}}q^{(\inv-\lec)\pi}t^{\lec\,\pi}$. We recall that,  to derive the hook factorization of a permutation, one can start from the right and factor out  each hook step by step. Therefore, the hook factorization of $\pi=\pi_1\cdots\pi_{j-1}\pi_j1\pi_{j+2}\cdots\pi_n$ in $\pi\in\bar{\S}_n^{(j)}$ is $p\tau_1\cdots\tau_s\tau'_1\cdots\tau'_r$, where $p\tau_1\cdots\tau_s$ and $\tau'_1\cdots\tau'_r$ are hook factorizations of $\pi_1\cdots\pi_{j-1}$ and $\pi_j1\pi_{j+2}\cdots\pi_n$,  respectively.  When $n>j$, it is not difficult to see that
$$\lec(\pi_j1\pi_{j+2}\cdots\pi_n)=1+\lec(\pi_j\pi_{j+2}\cdots\pi_n)
$$
and
$$
(\inv-\lec)(\pi_j1\pi_{j+2}\cdots\pi_n)=(\inv-\lec)(\pi_j\pi_{j+2}\cdots\pi_n).
$$ Thus by~\eqref{eq:qmul}, we have 
\begin{equation}
\bar{B}_n^{(j)}(t,q)=A_{j-1}(t,q)q^{j-1}{n-1\brack j-1}_qtA_{n-j}(t,q)
\end{equation}
for $n>j$. Clearly, $\bar{B}_j^{(j)}(t,q)=A_{j-1}(t,q)q^{j-1}$. So, by~\eqref{fixversion}, the exponential generating function $\sum_{n\geq j}\bar{B}_n^{(j)}(t,q)\frac{z^{n-1}}{(q;q)_{n-1}}$ is
  the right side of~\eqref{restrict}. This finishes the proof of the lemma.
\end{proof}
 \begin{remark}
This interpretation can also be deduced directly from the interpretation in Lemma~\ref{le:4} using Burstein's bijection~\cite{ab}. 
 \end{remark}

For $X\subset[n]$ with $|X|=m$ and $1\in X$, we can define $\bar{\S}_X^{(j)}$  for $1\leq j\leq m$ similarly as $\bar{\S}_m^{(j)}$ like this:
$$
\bar{\S}_X^{(j)}:=\{\pi\in\S_X: \pi(j+1)=1\}\,\, \text{for $1\leq j< m$}\quad\text{and}\quad\bar{\S}_X^{(m)}:=\{\pi' \square 1: \pi'\in\S_{X\setminus\{1\}}\}.
$$
For $1\leq j\leq n$, we define a {\it $j$-restricted two-pix-permutation of $[n]$} to be
 a pair ${\bf v}=(\pi, p_2)$ satisfying:
 \begin{itemize}
 \item $p_2$ (possibly empty) is an increasing word on $[n]$ and 
 \item $\pi\in\bar{\S}_{X}^{(j)}$ with $X=[n]\setminus\cont(p_2)$. 
 \end{itemize}
 Similarly, we define $\lec({\bf v})=\lec(\pi)$ and $\inv({\bf v})=\inv(\pi)+\inv(\cont(\pi),\cont(p_2))$. Let  $\W_{n}^{(j)}$  denote the set of all $j$-restricted two-pix-permutations of $[n]$.

 \begin{lemma}\label{le:7}
 Let $a,j$ be positive integers. Then
\begin{equation} \label{eq:comb}
\sum_{{\bf v}\in\W_n^{(j)}\atop\lec{\bf v}=a}q^{(\inv-\lec){\bf v}}=\sum_{k\geq1}{n-1\brack k-1}_qB_{k,a}^{(j)}(q).
\end{equation}
 \end{lemma}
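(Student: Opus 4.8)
The plan is to classify the $j$-restricted two-pix-permutations in $\W_n^{(j)}$ according to the set of letters $X=[n]\setminus\cont(p_2)$ that carries the permutation part $\pi$, exactly mirroring the decomposition used in Lemma~\ref{th:3}. First I would observe that, since $p_2$ is required to be increasing, it is completely determined by its content; hence a pair ${\bf v}=(\pi,p_2)\in\W_n^{(j)}$ is equivalent to the data of a subset $X\subseteq[n]$ with $1\in X$ together with a permutation $\pi\in\bar{\S}_X^{(j)}$, the word $p_2$ being the increasing rearrangement of $[n]\setminus X$.

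Next I would split the weight. Writing $\cont(\pi)=X$ and $\cont(p_2)=[n]\setminus X$, the definition $\inv({\bf v})=\inv(\pi)+\inv(\cont(\pi),\cont(p_2))$ together with $\lec({\bf v})=\lec(\pi)$ gives
$$(\inv-\lec){\bf v}=(\inv-\lec)\pi+\inv(X,[n]\setminus X).$$
Consequently the left-hand side of the claimed identity factors as
$$\sum_{{\bf v}\in\W_n^{(j)}\atop\lec{\bf v}=a}q^{(\inv-\lec){\bf v}}=\sum_{X\subseteq[n]\atop1\in X}q^{\inv(X,[n]\setminus X)}\sum_{\pi\in\bar{\S}_X^{(j)}\atop\lec\pi=a}q^{(\inv-\lec)\pi}.$$

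For the inner sum I would use the fact that $\lec$ and $\inv-\lec$ depend only on the relative order of the letters, so that the order-preserving relabeling $X\to[k]$ (where $k=|X|$) is a bijection $\bar{\S}_X^{(j)}\to\bar{\S}_k^{(j)}$ preserving both statistics; here the condition $\pi(j+1)=1$ transports correctly because $1=\min X$. Lemma~\ref{le:6} then identifies the inner sum with $B_{k,a}^{(j)}(q)$, and for $k<j$ both sides vanish. Grouping the outer sum by $k=|X|$ reduces the problem to evaluating $\sum_{1\in X,\,|X|=k}q^{\inv(X,[n]\setminus X)}$.

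The one point that needs care, and the main (if modest) obstacle, is this last evaluation: because $1$ is forced to lie in $X$, the answer is ${n-1\brack k-1}_q$ rather than ${n\brack k}_q$. I would argue that the letter $1$, being minimal, contributes nothing to $\inv(X,[n]\setminus X)$, so writing $X=\{1\}\cup X'$ with $X'\subseteq\{2,\dots,n\}$ and $|X'|=k-1$ gives $\inv(X,[n]\setminus X)=\inv(X',\{2,\dots,n\}\setminus X')$. Summing over all such $X'$ and invoking the $q$-multinomial interpretation~\eqref{eq:qmul} on the $(n-1)$-element ground set $\{2,\dots,n\}$ yields exactly ${n-1\brack k-1}_q$. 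Substituting this back and summing over $k$ completes the identity.
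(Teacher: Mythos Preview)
Your proposal is correct and is exactly the argument the paper has in mind: the paper's own proof simply reads ``It follows from Lemma~\ref{le:6} and some similar arguments as in the proof of Lemma~\ref{th:3},'' and you have spelled out those arguments in full, including the one point that differs from Lemma~\ref{th:3}, namely that the constraint $1\in X$ forces the $q$-binomial coefficient to be ${n-1\brack k-1}_q$ rather than ${n\brack k}_q$.
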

 \begin{proof} It follows from Lemma~\ref{le:6} and some similar arguments as in the proof of Lemma~\ref{th:3}.
 \end{proof}
 
 \begin{lemma} \label{le:8}Let $2\leq j\leq n$. Then
there is an involution ${\bf v} \mapsto {\bf u}$ 
on $\W_n^{(j)}$ satisfying
\begin{align}\label{pro:symt}
\lec ({\bf v}) = n -2 - \lec ({\bf u}),\quad \textrm{and}\quad
(\inv-\lec) {\bf v} = (\inv-\lec) {\bf u}.
\end{align}
 \end{lemma}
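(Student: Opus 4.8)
The plan is to build the involution directly on the hook-factorization level, in close analogy with the involution of Lemma~\ref{th: 4}, the only new feature being that the reflection constant is now $n-2$, which I will obtain by applying the decrementing map $d'$ at \emph{both} ends of the arrangement instead of at one end only. Given ${\bf v}=(\pi,p_2)\in\W_n^{(j)}$ with $\pi\in\bar\S_X^{(j)}$ and $m=|X|$, I would decompose it into blocks: the increasing prefix $p$ of $\pi$, the hooks $\sigma_1,\dots,\sigma_t$ of $\pi$ (with $\sigma_c$ the hook carrying the letter $1$, or the block $\square1$ when $m=j$), and the trailing increasing word $p_2$. The involution applies $d'$ to the two extreme blocks — the left extreme being $p$ (or the first hook if $p=\emptyset$) and the right extreme being $p_2$ (or the last hook if $p_2=\emptyset$) — and $d$ to every remaining (non-extreme) hook, with the standing convention that $d'$ interchanges the slots ``increasing word'' and ``hook''. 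Because $d$ and $d'$ fix the content of each block, the between-block inversions, and hence $(\inv-\lec)$, are preserved by the formula \eqref{eq:il}; and in the generic situation every letter lies in a reflected block, so the reflection sum equals $n-(\#d')=n-2$, giving $\lec({\bf u})=n-2-\lec({\bf v})$.

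The hypothesis $j\ge2$ enters because it forces the $1$-block $\sigma_c$ to start at position $j\ge2$, hence never to be the leftmost block; thus the left $d'$ never touches the letter $1$. Applying $d$ to $\sigma_c$ (or $d'$ when it keeps $\sigma_c$ a genuine hook) leaves $1$ in the second position of its block, i.e.\ at position $j+1$ of the underlying permutation, so the image is again a $j$-restricted two-pix-permutation. As in Lemmas~\ref{sypix} and~\ref{th: 4}, I would check that replacing each hook by its $d$-image and toggling the extreme blocks by $d'$ reproduces a legitimate hook factorization, and that on the generic configurations the resulting map is an involution.

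The main obstacle is the right end in the case $p_2=\emptyset$ with $\sigma_c$ itself the last hook. There $d'(\sigma_c)$ is harmless as long as it stays a hook, but in the extreme situation $\inv(\sigma_c)=|\sigma_c|-1$ the block $d'(\sigma_c)$ becomes increasing and would eject $1$ from $X$, which is illegal since $1\in X$. This is exactly what the $\square1$-formalism is meant to absorb. I would pair such a ${\bf v}$ (which necessarily has $m=n$) with the $m=j$ configuration obtained by parking $1$ as $\square1$, reflecting the initial part $p\sigma_1\cdots\sigma_{c-1}$ by ``$d'$ on the left, $d$ on the rest'', and recording the remaining letters $\cont(\sigma_c)\setminus\{1\}$ in increasing order as the new $p_2$. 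A direct computation then gives $\lec({\bf v})+\lec({\bf u})=(|p|+\sum_{i<c}|\sigma_i|-1)+(|\sigma_c|-1)=(j-2)+(n-j)=n-2$, and $(\inv-\lec)$ is again preserved, the point being that splitting $\cont(\sigma_c)$ into $\{1\}$ and the rest creates no new inversion because $1$ is minimal. The conceptual reason for the regime change is a bookkeeping asymmetry: in the $m=j$ regime the letter $1$ lives in the $\lec$-inert block $\square1$, so only one $d'$ is needed there (reflection sum $(n-1)-1=n-2$) instead of two.

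Finally I would confirm that this cross-regime pairing is exactly inverse to the generic rule applied on the $m=j$ side — so that the full map is a genuine involution — and dispose of the boundary $n=j$, where $\W_n^{(j)}$ consists only of the $m=j$ configurations $(\pi'\square1,\emptyset)$ and the reflection $\lec(\pi')\mapsto (n-1)-1-\lec(\pi')$ is achieved by a single $d'$ on $\pi'$ with $\square1$ held fixed. This cross-regime verification, rather than any single computation, is where essentially all the technical care resides.
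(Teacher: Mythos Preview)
Your approach is essentially the paper's own: apply $d'$ to the leftmost block $\tau_0$ (the prefix $p$ or, when $p=\emptyset$, the first hook), apply $d$ to the intermediate hooks, and treat the rightmost pieces $\tau_r$ and $p_2$ with a mix of $d$, $d'$, and the ``cross-regime'' swap between the $\square 1$ configurations and the extreme hook $y_s\,1\,y_1\cdots y_{s-1}$ with $y_s>y_{s-1}$. The paper simply writes this out as an explicit three-way case split on $\tau_r$ (namely $\tau_r=\square 1$; $1\in\cont(\tau_r)$; $1\notin\cont(\tau_r)$), each with two or three subcases according to whether $p_2=\emptyset$ and whether $d'$ turns the last block into an increasing word; your outline collapses these into ``generic rule plus one exceptional pairing'', which is the same construction organised differently.

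One point of exposition deserves tightening. When you say the cross-regime pairing is ``exactly inverse to the generic rule applied on the $m=j$ side'', note that the generic rule does not literally apply there: $\square 1$ is neither a hook to which $d$ can be applied nor an end block to which $d'$ applies. What you must \emph{define} on an $(m=j,\,p_2\neq\emptyset)$ configuration is: reflect $\tau_0,\dots,\tau_{r-1}$ as usual, then merge $\square 1$ with $p_2=x_1\cdots x_l$ into the single hook $x_l\,1\,x_1\cdots x_{l-1}$ (and set the new $p_2=\emptyset$). This is precisely the paper's case~(i) with $p_2\neq\emptyset$, and it is then immediate that it inverts your cross-regime map. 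With that made explicit, your sketch matches the paper's proof in every detail.
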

 \begin{proof} Suppose ${\bf v}=(\pi,p_2)\in\W_n^{(j)}$ and $\pi=\tau_0\tau_1\cdots\tau_r$ is the hook factorization of $\pi$ such that $\tau_0$ is a hook or an increasing word and $\tau_i$ ($1\leq i\leq r$) are hooks.
 We also assume that $p_2=x_1\cdots x_l$ if $p_2$ is not empty. Note that $1\notin\cont(\tau_0)$ since $j\neq1$. We will use the involutions $d$ and $d'$ defined in~\eqref{def:d} and~\eqref{def:dd}. There are several cases to be considered:
 \begin{itemize}
 \item[(i)] $\tau_r=\square1$.  Then 
 $$
 {\bf u}=
 \begin{cases}
 (d'(\tau_0)d(\tau_1)\cdots d(\tau_{r-1})x_l1x_1x_2\cdots x_{l-1},\emptyset), &\text{if $p_2\neq\emptyset$};\\
  (d'(\tau_0)d(\tau_1)\cdots d(\tau_{r-1})\square1,\emptyset), &\text{otherwise}.
  \end{cases}
 $$
 \item[(ii)] $\tau_r=y_s1y_1\cdots y_{s-1}$. Then
 $$
 {\bf u}=
 \begin{cases}
 (d'(\tau_0)d(\tau_1)\cdots d(\tau_{r-1})d(\tau_r)d'(p_2),\emptyset), &\text{if $p_2\neq\emptyset$};\\
 (d'(\tau_0)d(\tau_1)\cdots d(\tau_{r-1})\square1,y_1\cdots y_s), &\text{if $p_2=\emptyset$ and  $y_s>y_{s-1}$};\\
  (d'(\tau_0)d(\tau_1)\cdots d(\tau_{r-1})d'(\tau_r),\emptyset), &\text{otherwise}.
  \end{cases}
 $$
 \item[(iii)] $1\notin\cont(\tau_r)$.  Then
  $$
 {\bf u}=
 \begin{cases}
 (d'(\tau_0)d(\tau_1)\cdots d(\tau_{r-1})d(\tau_r)d'(p_2),\emptyset), &\text{if $p_2\neq\emptyset$};\\
 (d'(\tau_0)d(\tau_1)\cdots d(\tau_{r-1}),d'(\tau_r)), &\text{if $p_2=\emptyset$ and  $\lec(\tau_r)=|\tau_r|-1$};\\
  (d'(\tau_0)d(\tau_1)\cdots d(\tau_{r-1})d'(\tau_r),\emptyset), &\text{otherwise}.
  \end{cases}
 $$
 \end{itemize}
 
 First of all, one can check that ${\bf u}\in\W_n^{(j)}$. Secondly, as $d,d'$ are involutions, the above mapping is an involution. Finally, this involution satisfies~\eqref{pro:symt} in all cases. This completes the proof of the lemma.
 \end{proof}
 
 Combining Lemmas~\ref{le:7} and \ref{le:8} we obtain a bijective proof of Theorem~\ref{th:main1}.

\subsection*{Acknowledgement}
The author would like to thank Prof. Jiang Zeng for useful conversations.

\end{document}